\numberwithin{equation}{section}
 \newtheorem{theorem}{Theorem}[section]
 \newtheorem{lemma}[theorem]{Lemma}
\def\3bar{{|\hspace{-.02in}|\hspace{-.02in}|}}
\def\E{{\mathcal{E}}}
\def\T{{\mathcal{T}}}
\def\cal#1{{\mathcal #1}}
\def\pT{{\partial T}}
\def\bw{{\mathbf{w}}}
\def\bI{{\mathbf{I}}}
\def\bf{{\mathbf{f}}}
\def\bu{{\mathbf{u}}}
\def\bg{{\mathbf{g}}}
\def\bv{{\mathbf{v}}}
\def\bn{{\mathbf{n}}}
\def\be{{\mathbf{e}}}
\def\beta{{\boldsymbol{\eta}}}
\def\bvarphi{{\boldsymbol{\varphi}}}
\def\bzeta{{\boldsymbol{\zeta}}}
\newtheorem{remark}{Remark}[section]
\newtheorem{algorithm}{Simplified Weak Galerkin Algorithm}[section]
\numberwithin{equation}{section}
\def\3bar{{|\hspace{-.02in}|\hspace{-.02in}|}}
 \def\cal#1{\mathcal{#1}}
\def\ad#1{\begin{aligned}#1\end{aligned}}  \def\b#1{\mathbf{#1}} 
 \def\an#1{\begin{align}#1\end{align}}  \def\p#1{\begin{pmatrix}#1\end{pmatrix}}
\begin{document}

\title []{Simplified Weak Galerkin Methods for Linear Elasticity on Nonconvex Domains}

  \author {Chunmei Wang}
  \address{Department of Mathematics, University of Florida, Gainesville, FL 32611, USA. }
  \email{chunmei.wang@ufl.edu}
  \thanks{The research of Chunmei Wang was partially supported by National Science Foundation Grant DMS-2136380.} 
 
\author {Shangyou Zhang}
\address{Department of Mathematical Sciences,  University of Delaware, Newark, DE 19716, USA}   \email{szhang@udel.edu}

\begin{abstract} 
This paper presents a weak Galerkin (WG) finite element method for linear elasticity on general polygonal and polyhedral meshes, free from convexity constraints,  by leveraging bubble functions as central analytical tools. The proposed method eliminates the need for stabilizers commonly used in traditional WG methods, resulting in a simplified formulation. The method is symmetric, positive definite, and straightforward to implement. Optimal-order error estimates are established for the WG approximations in the discrete $H^1$-norm, assuming sufficient smoothness of the exact solution, and in the standard $L^2$-norm under regularity assumptions for the dual problem. Numerical experiments confirm the efficiency and accuracy of the proposed stabilizer-free WG method.

\end{abstract}

\keywords{weak Galerkin, finite element methods, auto-stabilized, weak strain tensor, non-convex, bubble functions, ploytopal meshes,  linear elasticity problems.}

\subjclass[2010]{65N30, 65N15, 65N12, 65N20}
 
\maketitle

\section{Introduction}
 This paper introduces a novel weak Galerkin finite element method for linear elasticity that eliminates the stabilizers traditionally required in WG methods. The innovation of the proposed approach, compared to existing stabilizer-free WG methods (e.g., \cite{ye}), lies in its effectiveness on general polygonal and polyhedral meshes, including those with nonconvex geometries, as it operates without requiring convexity assumptions.

Let $\Omega\subset\mathbb R^d$ $(d=2, 3)$ be an open, bounded, and connected domain with a Lipschitz continuous boundary $\partial \Omega$, representing  an elastic body subjected  to an exterior force $\bf$ and a prescribed  displacement boundary condition. The kinematic model of linear elasticity aims to determine  a displacement vector field $\bu$ satisfying 
\begin{equation}\label{model}
    \begin{split}
       -\nabla\cdot \sigma(\bu)&= \bf,\qquad \text{in}\ \Omega,  \\
  \bu&= \bg, \qquad \text{on}\ \partial \Omega,    
    \end{split}
\end{equation} 
 where $\sigma(\bu)$ is  the symmetric Cauchy stress tensor. For linear, homogeneous, and isotropic materials, the stress tensor is expressed as
$$
\sigma(\bu)=2\mu\epsilon(\bu)+\lambda (\nabla\cdot\bu) \bI,
$$
 with $\epsilon(\bu)=\frac{1}{2}(\nabla \bu+\nabla \bu^T)$ 
denoting the linear strain tensor, and $\mu$ and $\lambda$  representing  the Lam$\acute{e}$ constants. In the case of linear plane strain, the  Lam$\acute{e}$
constants are  defined as
$$
\lambda=\frac{E\nu}{(1+\nu)(1-2\nu)}, \qquad
\mu=\frac{E}{2(1+\nu)},
$$
where $E$ is the elasticity modulus, and $\nu$ is Poisson’s ratio.
 
The variational formulation of the model problem \eqref{model} can be formulated  as follows: Find an unknown function $\bu\in [H^1(\Omega)]^d$ such that $\bu=\bg$  on $\partial \Omega$ and
\begin{equation}\label{weak}
2(\mu \epsilon(\bu), \epsilon(\bv))+(\lambda \nabla\cdot\bu, \nabla\cdot\bv)=(\bf, \bv), \qquad \forall \bv\in [H_0^1(\Omega)]^d,
 \end{equation} 
  where $H_0^1(\Omega)=\{v\in H^1(\Omega): v=0 \ \text{on}\ \partial \Omega\}$.

  The Finite Element Method (FEM) and its variants are extensively used for solving partial differential equations(PDEs) numerically. In the context of elasticity problems, mixed FEMs are particularly popular; however, enforcing strong symmetry on the stress tensor presents a significant challenge. To address this, several strategies have been developed, including relaxing the symmetry constraint on the stress tensor \cite{57}, constructing weakly symmetric mixed finite elements \cite{13}, and designing nonconforming mixed FEMs \cite{4, 9, 12, 29, 62, 63, 64}. A breakthrough was achieved with a family of conforming mixed elements featuring reduced degrees of freedom, applicable in any dimension. This was accomplished by identifying a critical structure within the discrete stress spaces of symmetric matrix-valued polynomials on simplicial grids and establishing two fundamental algebraic results \cite{41}.
The Discontinuous Galerkin (DG) method has also been widely adopted for elasticity problems \cite{19, 59}. A key advantage of DG methods is their ability to discretize problems on an element-by-element basis, seamlessly connecting elements through numerical traces \cite{5, 31}. For linear elasticity, innovative approaches include a three-field decomposition method \cite{15} and a novel hybridized mixed method \cite{28}. Other noteworthy methods, such as the tangential-displacement  normal-stress method, have demonstrated robustness against both shear and volume locking \cite{48, 49}.

The weak Galerkin finite element method has revolutionized the numerical landscape for solving partial differential equations (PDEs). By harnessing the power of distributions and piecewise polynomials, WG transcends traditional finite element approaches. Unlike its predecessors, WG relaxes the stringent regularity requirements for function approximations, instead leveraging carefully crafted stabilizers to ensure method stability. Recent studies have exhaustively explored the versatility of WG in tackling diverse model PDEs, thereby casting the method as a robust and reliable tool in computational science \cite{wg1, wg2, wg3, wg4, wg5, wg6, wg7, wg8, wg9, wg10, wg11, wg12, wg13, wg14, wg15, wg16, wg17, wg18, wg19, wg20, wg21, itera, wy3655}. Its capacity to adapt to a wide range of PDEs is underscored by its use of weak derivatives and weak continuities in designing numerical schemes based on the weak forms of underlying PDEs.
A key advancement within the WG paradigm is the Primal-Dual Weak Galerkin (PDWG) method \cite{pdwg1, pdwg2, pdwg3, pdwg4, pdwg5, pdwg6, pdwg7, pdwg8, pdwg9, pdwg10, pdwg11, pdwg12, pdwg13, pdwg14, pdwg15}.  PDWG views numerical solutions as constrained minimizations of functionals, with constraints that mimic the weak formulation of PDEs using weak derivatives. This formulation results in an Euler-Lagrange equation that integrates both the primal variable and the dual variable (Lagrange multiplier), yielding a symmetric scheme. 
 
This paper presents a simplified formulation of the  WG  finite element method that eliminates the need for stabilizers. Unlike the existing stabilizer-free WG methods \cite{ye}, our approach is versatile, as it works on   polytopal meshes without convexity constraints, 
 and supports flexible polynomial degrees. The critical innovation facilitating these advancements is the incorporation of bubble functions. While this method requires the use of higher-degree polynomials for calculating the discrete weak derivatives, which may pose challenges for certain practical applications, our focus is on the theoretical advancements, particularly in developing WG methods with inherent stabilizers, specifically designed for non-convex elements in finite element partitions.

Our method retains the size and global sparsity of the stiffness matrix, greatly simplifying programming complexity in comparison to traditional stabilizer-dependent WG methods. Theoretical analysis confirms that our WG approximations yield optimal error estimates in both the discrete 
$H^1$
 and 
$L^2$
 norms. By providing a stabilizer-free WG method that sustains high performance while reducing computational complexity, this paper makes a significant contribution to the development of finite element methods on non-convex polytopal meshes.

 The structure of this paper is as follows. Section 2 provides a concise review of weak differential operators and their discrete counterparts. In Section 3, we present the weak Galerkin scheme that eliminates the need for stabilizers. Section 4 proves the existence and uniqueness of the solution. In Section 5, we derive the error equation for the proposed scheme, followed by Section 6, which focuses on the error estimate for the numerical approximation in the energy norm. Section 7 extends this analysis to establish the error estimate in the 
$L^2$ norm. Finally, Section 8 presents numerical tests to validate the theoretical findings from the previous sections.

Throughout this paper, we adopt standard notations. Let $D$ be any open bounded domain   in $\mathbb{R}^d$ with a Lipschitz continuous boundary.  The inner product, semi-norm and norm in the Sobolev space $H^s(D)$ for any integer $s\geq0$ are denoted by $(\cdot,\cdot)_{s,D}$, $|\cdot|_{s,D}$ and $\|\cdot\|_{s,D}$, respectively. For simplicity, when the domain $D$ is chosen as $D=\Omega$, the subscript $D$ is  omitted from the notations of the inner product and norm. For the case where $s=0$, the notations $(\cdot,\cdot)_{0,D}$, $|\cdot|_{0,D}$ and $\|\cdot\|_{0,D}$ are further simplified as $(\cdot,\cdot)_D$, $|\cdot|_D$ and $\|\cdot\|_D$, respectively.

\section{Discrete Weak Strain Tensor and Discrete Weak Divergence}\label{Section:Hessian}
In this section, we briefly review the definition of weak strain tensor and weak divergence, along with their discrete counterparts introduced in \cite{wg10, wg18}.

Let $T$ be a polytopal element with boundary $\partial T$. A weak function on $T$  is defined as   $\bv=\{\bv_0, \bv_b\}$, where $\bv_0\in [L^2(T)]^d$, $\bv_b\in [L^{2}(\partial T)]^d$. The first component, $\bv_0$, represents the value of $\bv$ in the interior of $T$, while the second component, $\bv_b$, corresponds to  the value of $\bv$   on the boundary of $T$. In general, $\bv_b$ is assumed to be independent of the trace of $\bv_0$. A special case arises when $\bv_b= \bv_0|_{\partial T}$, where the function $
\bv=\{\bv_0, \bv_b\}$ is fully determined by $\bv_0$ and can be simply denoted as $\bv=\bv_0$.
 
 Denote by $W(T)$ the space of all weak functions on $T$; i.e.,
 \begin{equation*}\label{2.1}
 W(T)=\{v=\{\bv_0,\bv_b\}: \bv_0\in [L^2(T)]^d, \bv_b\in [L^{2}(\partial
 T)]^d\}.
\end{equation*}
 
 The weak gradient, denoted by $\nabla_{w}$, is a linear
 operator from $W(T)$ to the dual space of $[H^{1}(T)]^{d\times d}$. For any
 $\bv\in W(T)$, the weak gradient  $\nabla_w \bv$ is defined as a bounded linear functional on $[H^{1}(T)]^{d\times d}$
such that
 \begin{equation*}\label{2.3}
  (\nabla _{w}\bv, \bvarphi)_T=-(\bv_0,\nabla\cdot \bvarphi)_T+
  \langle \bv_b, \bvarphi\cdot \bn \rangle_{\partial T},\quad \forall \bvarphi\in [H^{1}(T)]^{d\times d},
  \end{equation*}
 where $ \bn$ is an unit outward normal direction to $\partial T$.
 
 For any non-negative integer $r$, let $P_r(T)$  represent the space of
 polynomials on $T$ with total degree at most 
 $r$. A discrete weak
 gradient on $T$, denoted by $\nabla_{w, r_1, T}$, is a linear operator
 from $W(T)$ to $[P_{r_1}(T)]^{d\times d}$. For any $\bv\in W(T)$,
 $\nabla_{w, r_1, T}\bv$ is the unique polynomial matrix in $[P_{r_1}(T)]^{d\times d}$ satisfying
 \begin{equation*}\label{2.4}
  (\nabla_{w,r_1,T} \bv, \bvarphi)_T=-(\bv_0,\nabla\cdot \bvarphi)_T+
  \langle \bv_b,  \bvarphi\cdot \bn \rangle_{\partial T},\quad \forall \bvarphi \in [P_{r_1}(T)]^{d\times d}.
  \end{equation*}

We define the discrete weak strain tensor  as follows:
$$
\epsilon_{w,r_1,T}(\bu)=\frac{1}{2}(\nabla_{w,r_1,T}\bu+\nabla_{w,r_1,T}\bu^T).
$$
 
 For any $\bv\in W(T)$, the discrete weak strain tensor, denoted by
 $\epsilon_{w, r_1, T}(\bv)$,  is the unique polynomial matrix in $[P_{r_1}(T)]^{d\times d}$ satisfying
 \begin{equation}\label{2.5}
  (\epsilon_{w,r_1,T} (\bv), \bvarphi)_T=-(\bv_0,\nabla  \cdot \frac{1}{2}(\bvarphi+\bvarphi^T) )_T+
  \langle \bv_b,  \frac{1}{2}(\bvarphi+\bvarphi^T)\cdot \bn \rangle_{\partial T}, 
  \end{equation}
   for all $\bvarphi \in [P_{r_1}(T)]^{d\times d}$.
  
 For a smooth $\bv_0\in
 [H^1(T)]^d$,  applying the usual integration by parts to the first
 term on the right-hand side of (\ref{2.5})  gives
 \begin{equation}\label{2.5new}
 (\epsilon_{w,r_1,T} (\bv), \bvarphi)_T=(\epsilon( \bv_0), \bvarphi)_T+
  \langle \bv_b-\bv_0, \frac{1}{2}( \bvarphi+\bvarphi^T)\cdot \bn \rangle_{\partial T}.  
  \end{equation} 
 for all $\bvarphi \in [P_{r_1}(T)]^{d\times d}$.

 The weak divergence of $\bv\in W(T)$, denoted by $\nabla_w\cdot \bv$, is a bounded linear functional in the Sobolev space $H^1(T)$, and its action on any $\phi\in H^1(T)$ is given by
\begin{equation*}\label{div}
    (\nabla_w\cdot \bv, \phi)_T=-(\bv_0, \nabla\phi)_T+\langle \bv_b\cdot\bn, \phi\rangle_{\partial T}.
\end{equation*}

The discrete weak divergence of $\bv\in W(T)$, denoted by $\nabla_{w, r_2, T}\cdot \bv$, is the unique polynomial in $P_{r_2}(T)$ satisfying 
\begin{equation}\label{disdiv}
    (\nabla_{w, r_2, T}\cdot \bv, \phi)_T=-(\bv_0, \nabla\phi)_T+\langle \bv_b\cdot\bn, \phi\rangle_{\partial T},
\end{equation}
for any $\phi\in P_{r_2}(T)$. 

 For a smooth $\bv_0\in
 [H^1(T)]^d$,  applying the usual integration by parts to the first
 term on the right-hand side of (\ref{disdiv})  gives
\begin{equation}\label{disdivnew}
    (\nabla_{w, r_2, T}\cdot \bv, \phi)_T= (\nabla\cdot \bv_0,  \phi)_T+\langle (\bv_b-\bv_0)\cdot\bn, \phi\rangle_{\partial T},
\end{equation}
for any $\phi\in P_{r_2}(T)$.

\section{Weak Galerkin Algorithms without Stabilizers}\label{Section:WGFEM}
 Let ${\cal T}_h$ be a finite element partition of the domain
 $\Omega\subset \mathbb R^d$ into polytopal elements, where   ${\cal
 T}_h$ is assumed to be shape-regular as defined in   \cite{wy3655}.
 Denote by ${\mathcal E}_h$ the set of all edges/faces  in
 ${\cal T}_h$, and let ${\mathcal E}_h^0={\mathcal E}_h \setminus
 \partial\Omega$ be the set of interior edges/faces. The diameter of an element  $T\in {\cal T}_h$ is denoted
 by $h_T$, and the mesh size of the partition is given by $h=\max_{T\in {\cal
 T}_h}h_T$.


 For each element $T\in\T_h$, we define the local weak finite element space as:
 \begin{equation}\label{Vk}
 V(k, T)=\{\{\bv_0,\bv_b\}: \bv_0\in [P_k(T)]^d,\bv_b\in [P_{k}(e)]^d\}.   
 \end{equation}
By assembling $V(k, T)$ over all the elements $T\in {\cal T}_h$ and enforcing continuity on the interior interfaces $\E_h^0$,
 we define the global weak finite element space:
\begin{equation}\label{Vh}
 V_h=\big\{\{\bv_0,\bv_b\}:\ \{\bv_0,\bv_b\}|_T\in V(k,  T),
 \forall T\in {\cal T}_h \big\}.
 \end{equation}
Additionally, we denote by $V_h^0$ the subspace of $V_h$ with vanishing boundary value on $\partial\Omega$:
\begin{equation}\label{Vh0}
V_h^0=\{\{\bv_0,\bv_b\}\in V_h: \bv_b=0 \ \text{on}\ \partial\Omega\}.
\end{equation}

For simplicity,  the discrete weak strain tensor $\epsilon_{w, r_1, T}\bv$ and the discrete weak divergence $\nabla_{w, r_2, T} \cdot\bv$  are denoted by  
  $\epsilon_{w}\bv$ and $\nabla_{w} \cdot\bv$, respectively. These quantities are computed locally on each element $T$ using definitions 
\eqref{2.5} and \eqref{disdiv}: 
$$
(\epsilon_{w} \bv)|_T= \epsilon_{w, r_1, T}(\bv |_T), \qquad \forall T\in \T_h,
$$ 
$$
(\nabla_{w}\cdot \bv)|_T= \nabla_{w, r_2, T}\cdot(\bv |_T), \qquad \forall T\in \T_h.
$$

 The WG numerical scheme without stabilizers,  based on the weak formulation \eqref{weak} for the elasticity problem   \eqref{model},  is as follows:
  \begin{algorithm}\label{PDWG1}
 Find $\bu_h=\{\bu_0, \bu_b\} \in V_h$  such that    $\bu_b=Q_b\bg$ on $\partial\Omega$ and  
 \begin{equation}\label{WG}
\sum_{T\in {\cal T}_h} 2(\mu \epsilon_w(\bu_h), \epsilon_w(\bv))_T+(\lambda \nabla_w \cdot \bu_h, \nabla_w\cdot\bv)_T=\sum_{T\in {\cal T}_h}(\bf, \bv_0)_T, 
 \end{equation}
for all  $\bv=\{\bv_0, \bv_b\}\in V_h^0$. Here $Q_b$ denotes the $L^2$ projection operator onto the space $P_k(e)$.
 \end{algorithm}

This scheme directly solves the elasticity problem without introducing any stabilization terms.

\section{Solution Existence and Uniqueness} 
 
To begin, we recall the essential trace inequalities. Given that ${\cal T}_h$ is a shape-regular finite element partition of the domain $\Omega$, the following trace inequality holds for any element  $T\in {\cal T}_h$ and function $\phi\in H^1(T)$ \cite{wy3655}: 

\begin{equation}\label{tracein}
 \|\phi\|^2_{\partial T} \leq C(h_T^{-1}\|\phi\|_T^2+h_T \|\nabla \phi\|_T^2).
\end{equation}
For polynomials $\phi$, a simplified trace inequality is used  \cite{wy3655}: 
\begin{equation}\label{trace}
\|\phi\|^2_{\partial T} \leq Ch_T^{-1}\|\phi\|_T^2.
\end{equation}

Next, we define two norms that are crucial in the error analysis. 
For any $\bv=\{\bv_0, \bv_b\}\in V_h$, we define
the following discrete energy norm \begin{equation}\label{3norm}
\3bar \bv\3bar=\Big( \sum_{T\in {\cal T}_h} (2\mu\epsilon_w (\bv), \epsilon_w(\bv))_T+(\lambda \nabla_w \cdot \bv, \nabla_w \cdot \bv)_T\Big)^{\frac{1}{2}},
\end{equation}
and the following discrete $H^1$ semi-norm 
\begin{equation}\label{disnorm}
\|\bv\|_{1, h}=\Big(\sum_{T\in {\cal T}_h} (2\mu\epsilon  (\bv_0), \epsilon (\bv_0))_T+(\lambda \nabla  \cdot \bv_0, \nabla  \cdot \bv_0)_T+h_T^{-1}\|\bv_0-\bv_b\|_{\partial T}^2\Big)^{\frac{1}{2}}.
\end{equation}
\begin{lemma}\cite{wang1}\label{norm1}
 For $\bv=\{\bv_0, \bv_b\}\in V_h$, there exists a constant $C$ such that
 $$
 \|\epsilon(\bv_0)\|_T\leq C\|\epsilon_w (\bv)\|_T.
 $$
\end{lemma}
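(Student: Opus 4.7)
The plan is to pick a clever test tensor $\bvarphi$ in the identity \eqref{2.5new} so that the boundary term drops out and the left-hand side reduces to $\|\epsilon(\bv_0)\|_T^2$ up to a harmless weight. The natural uncontrolled choice $\bvarphi = \epsilon(\bv_0)$ would give exactly $(\epsilon(\bv_0),\epsilon(\bv_0))_T$ on the right, but leaves the surface term $\langle \bv_b-\bv_0,\epsilon(\bv_0)\cdot\bn\rangle_{\partial T}$ which cannot be absorbed. The remedy, in keeping with the bubble-function theme announced in the introduction, is to multiply by an interior bubble function that vanishes on $\partial T$.

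Concretely, I would let $b_T$ be a (scalar) bubble function on $T$ with $b_T|_{\partial T}=0$ and $0\le b_T\le 1$, and choose the test tensor
\begin{equation*}
\bvarphi = b_T\,\epsilon(\bv_0).
\end{equation*}
Since $\epsilon(\bv_0)$ is a symmetric polynomial matrix of degree at most $k-1$, and $b_T$ is a fixed polynomial of degree depending only on the shape of $T$, $\bvarphi\in [P_{r_1}(T)]^{d\times d}$ provided $r_1$ is chosen large enough (this is the reason the scheme uses higher-degree discrete weak operators). With this choice, $\bvarphi\cdot\bn=0$ on $\partial T$, so the boundary term in \eqref{2.5new} vanishes identically and we obtain
\begin{equation*}
(\epsilon_w(\bv),\, b_T\,\epsilon(\bv_0))_T \;=\; (\epsilon(\bv_0),\, b_T\,\epsilon(\bv_0))_T.
\end{equation*}

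Next I would invoke the bubble-weighted norm equivalence on the finite-dimensional polynomial space $[P_{k-1}(T)]_{\mathrm{sym}}^{d\times d}$: there exists a constant $c>0$, depending only on shape regularity (and, crucially, on the construction of $b_T$ which the paper develops for general, possibly non-convex polytopes), such that
\begin{equation*}
(p,\, b_T\, p)_T \;\ge\; c\,\|p\|_T^2 \qquad \forall p\in [P_{k-1}(T)]_{\mathrm{sym}}^{d\times d}.
\end{equation*}
Applied to $p=\epsilon(\bv_0)$ and combined with Cauchy--Schwarz on the left, together with $\|b_T\,\epsilon(\bv_0)\|_T\le\|\epsilon(\bv_0)\|_T$, this yields
\begin{equation*}
c\,\|\epsilon(\bv_0)\|_T^2 \;\le\; (\epsilon_w(\bv),\, b_T\,\epsilon(\bv_0))_T \;\le\; \|\epsilon_w(\bv)\|_T\,\|\epsilon(\bv_0)\|_T,
\end{equation*}
from which the desired inequality follows by dividing through by $\|\epsilon(\bv_0)\|_T$.

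The genuinely hard part is not the computation above but the prerequisite construction of the bubble function $b_T$ and the corresponding norm-equivalence estimate on \emph{non-convex} polytopal elements. On a shape-regular simplex the standard product of barycentric coordinates works immediately, but on a general, possibly non-convex polygon or polyhedron one must construct $b_T$ so that it still positively dominates polynomial mass up to a uniform constant. This is the technical innovation the paper advertises; assuming it (as is stated in \cite{wang1} cited in the lemma), the remainder of the argument is as sketched.
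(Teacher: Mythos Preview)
Your proposal is correct and follows essentially the same route as the paper: the paper takes the explicit bubble $\Phi_B = l_1^2(x)\cdots l_N^2(x)$ (squared so that it is nonnegative even on non-convex $T$), sets $\bvarphi=\Phi_B\,\epsilon(\bv_0)$ with $r_1=2N+k-1$, and then uses exactly the identity, the lower bound $(\epsilon(\bv_0),\Phi_B\epsilon(\bv_0))_T\ge C\|\epsilon(\bv_0)\|_T^2$ (via a domain inverse inequality), and Cauchy--Schwarz that you outline. The only cosmetic difference is that the paper provides the concrete bubble construction you defer to \cite{wang1}, and it bounds $\|\Phi_B\epsilon(\bv_0)\|_T\le C\|\epsilon(\bv_0)\|_T$ rather than assuming $b_T\le 1$.
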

\begin{proof}  Let  $T\in {\cal T}_h$ be a polytopal element with $N$ edges/faces denoted by $e_1, \cdots, e_N$. It is important to emphasis that the polytopal element $T$  can be non-convex. For each edge/face $e_i$, we construct   a linear equation  $l_i(x)$ such that  $l_i(x)=0$ on $e_i$,  defined as follows:
$$l_i(x)=\frac{1}{h_T}\overrightarrow{AX}\cdot \bn_i, $$  where  $A=(A_1, \cdots, A_{d-1})$ is a given point on the edge/face $e_i$,  $X=(x_1, \cdots, x_{d-1})$ is an arbitrary point on the edge/face $e_i$, $\bn_i$ is the normal direction to the edge/face $e_i$, and $h_T$ represents the size of the element $T$. 

The bubble function of  the element  $T$ can be  defined as 
 $$
 \Phi_B =l^2_1(x)l^2_2(x)\cdots l^2_N(x) \in P_{2N}(T).
 $$ 
 It is straightforward to verify that  $\Phi_B=0$ on the boundary $\partial T$.    The function 
  $\Phi_B$  can be scaled so that $\Phi_B(M)=1$ where   $M$  represents the barycenter of the element $T$. Additionally,  there exists a sub-domain $\hat{T}\subset T$ such that $\Phi_B\geq \rho_0$ for some constant $\rho_0>0$.
  
For $\bv=\{\bv_0, \bv_b\}\in V_h$, let $r_1=2N+k-1$ and set 
$\bvarphi=\Phi_B \epsilon (\bv_0)\in [P_{r_1}(T)]^{d\times d}$ in \eqref{2.5new}. We then obtain: 
\begin{equation}\label{t1}
\begin{split}
&(\epsilon_w (\bv), \Phi_B \epsilon (\bv_0))_T\\ =&(\epsilon(\bv_0), \Phi_B \epsilon (\bv_0))_T+\langle \bv_b-\bv_0,  \frac{1}{2}(\Phi_B \epsilon (\bv_0)+\Phi_B \epsilon (\bv_0)^T) \cdot \bn\rangle_{\partial T}\\ =&(\epsilon(\bv_0), \Phi_B \epsilon (\bv_0))_T,
\end{split}
\end{equation}
where we used $\Phi_B=0$ on $\partial T$.

From the domain inverse inequality \cite{wy3655},  there exists a constant $C$ such that 
\begin{equation}\label{t2}
(\epsilon (\bv_0), \Phi_B \epsilon (\bv_0))_T \geq C (\epsilon (\bv_0), \epsilon (\bv_0))_T.
\end{equation} 
Using the  Cauchy-Schwarz inequality along with  \eqref{t1}-\eqref{t2}, we have
 $$
 (\epsilon (\bv_0), \epsilon (\bv_0))_T\leq C (\epsilon_w (\bv), \Phi_B \epsilon (\bv_0))_T  \leq C  \|\epsilon_w (\bv)\|_T \|\Phi_B \epsilon (\bv_0)\|_T  \leq C
\|\epsilon_w (\bv)\|_T \|\epsilon (\bv_0)\|_T,
 $$
which implies
 $$
 \|\epsilon (\bv_0)\|_T\leq C\|\epsilon_w (\bv)\|_T.
 $$

This completes the proof of the lemma.
\end{proof}

\begin{lemma}\label{norm2}
 For $\bv=\{\bv_0, \bv_b\}\in V_h$, there exists a constant $C$ such that
 $$
 \|\nabla\cdot\bv_0\|_T\leq C\|\nabla_w\cdot \bv\|_T.
 $$
\end{lemma}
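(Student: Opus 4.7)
The plan is to mirror the argument used in Lemma \ref{norm1}, replacing the weak strain tensor identity \eqref{2.5new} with the weak divergence identity \eqref{disdivnew}. The same bubble function $\Phi_B = l_1^2(x) l_2^2(x) \cdots l_N^2(x) \in P_{2N}(T)$ is available on the (possibly non-convex) polytopal element $T$; it vanishes on $\partial T$, is bounded below by a constant on a sub-domain $\hat T \subset T$, and the corresponding domain inverse inequality applies to any polynomial on $T$.

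The key step is to choose the test function in \eqref{disdivnew} as $\phi = \Phi_B (\nabla \cdot \bv_0)$. Because $\bv_0 \in [P_k(T)]^d$, we have $\nabla \cdot \bv_0 \in P_{k-1}(T)$, so $\phi \in P_{2N+k-1}(T)$. Thus we should select $r_2 = 2N+k-1$ to guarantee $\phi \in P_{r_2}(T)$. Since $\Phi_B = 0$ on $\partial T$, the boundary term in \eqref{disdivnew} drops out, giving
\begin{equation*}
(\nabla_w \cdot \bv,\, \Phi_B (\nabla \cdot \bv_0))_T = (\nabla \cdot \bv_0,\, \Phi_B (\nabla \cdot \bv_0))_T.
\end{equation*}

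Next, the domain inverse inequality from \cite{wy3655}, applied to the polynomial $\nabla \cdot \bv_0$, yields a constant $C$ such that $(\nabla \cdot \bv_0,\, \Phi_B (\nabla \cdot \bv_0))_T \geq C \|\nabla \cdot \bv_0\|_T^2$. Combining this with the displayed equality and applying the Cauchy--Schwarz inequality on the left-hand side gives
\begin{equation*}
C \|\nabla \cdot \bv_0\|_T^2 \leq \|\nabla_w \cdot \bv\|_T \, \|\Phi_B (\nabla \cdot \bv_0)\|_T \leq \|\nabla_w \cdot \bv\|_T \, \|\nabla \cdot \bv_0\|_T,
\end{equation*}
from which the desired bound follows immediately.

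No real obstacle is anticipated: the argument is a direct scalar-valued analogue of Lemma \ref{norm1}. The only point requiring a bit of care is the polynomial degree bookkeeping, namely ensuring $r_2$ is taken large enough (at least $2N+k-1$) so that the test function $\Phi_B (\nabla \cdot \bv_0)$ lies in the admissible space $P_{r_2}(T)$; once that is fixed, the identity \eqref{disdivnew}, the vanishing of $\Phi_B$ on $\partial T$, and the domain inverse inequality close the estimate.
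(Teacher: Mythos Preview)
Your proposal is correct and matches the paper's approach exactly: the paper's proof of Lemma~\ref{norm2} simply states that it follows the same argument as Lemma~\ref{norm1}, which is precisely the bubble-function/domain-inverse-inequality argument you spelled out with $\phi=\Phi_B(\nabla\cdot\bv_0)$ and $r_2=2N+k-1$. The only cosmetic slip is the last inequality $\|\Phi_B(\nabla\cdot\bv_0)\|_T\leq\|\nabla\cdot\bv_0\|_T$, which should carry a constant (since $\Phi_B$ is merely bounded, not bounded by $1$); this is harmless for the conclusion.
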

\begin{proof}
The proof follows the same approach as in Lemma  \ref{norm1}.
\end{proof} 

\begin{remark}
   If the polytopal element $T$  is convex, 
   the bubble function  in Lemma \ref{norm1}  can be  simplified to
 $$
 \Phi_B =l_1(x)l_2(x)\cdots l_N(x).
 $$ 
It can be verified that there exists a sub-domain $\hat{T}\subset T$,  such that
 $ \Phi_B\geq\rho_0$  for some constant $\rho_0>0$,  and $\Phi_B=0$ on the boundary $\partial T$.   Lemmas \ref{norm1}-\ref{norm2}   can be proved in the same manner using this simplified construction. In this case, we take $r_1=N+k-1$ and $r_2=N+k-1$.  
\end{remark}

Recall that $T$ is a $d$-dimensional polytopal element and  $e_i$ is a $(d-1)$-dimensional edge/face  of $T$. 
We construct an edge/face-based bubble function   $$\varphi_{e_i}= \Pi_{k=1, \cdots, N, k\neq i}l_k^2(x).$$ It can be verified that  (1) $\varphi_{e_i}=0$ on the edge/face $e_k$ for $k \neq i$, (2) there exists a subdomain $\widehat{e_i}\subset e_i$ such that $\varphi_{e_i}\geq \rho_1$ for some constant $\rho_1>0$.

\begin{lemma}\label{phi}
     For $\bv=\{\bv_0, \bv_b\}\in V_h$, let $\bvarphi=(\bv_b-\bv_0) \bn^T\varphi_{e_i}$, where $\bn$ is the unit outward normal direction to the edge/face  $e_i$. The following inequality holds:
\begin{equation}
  \|\bvarphi\|_T ^2 \leq Ch_T \int_{e_i}((\bv_b-\bv_0)\bn^T)^2ds.
\end{equation}
\end{lemma}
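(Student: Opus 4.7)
The strategy is a direct calculation exploiting the structure of the edge/face-bubble $\varphi_{e_i}$, combined with a geometric parameterization of $T$ by projection onto the hyperplane containing $e_i$. Since $\bn$ is a unit vector, the Frobenius norm of the rank-one matrix $(\bv_b-\bv_0)\bn^T$ reduces to $|\bv_b-\bv_0|^2$, so the inequality to establish takes the form
\[
\int_T |\bv_b-\bv_0|^2\,\varphi_{e_i}^2 \, dx \;\le\; C h_T \int_{e_i} |\bv_b-\bv_0|^2 \, ds.
\]
Here $(\bv_b-\bv_0)|_{e_i}\in[P_k(e_i)]^d$ is understood as the jump viewed as a polynomial on the edge/face, extended into $T$ as constant along the direction normal to the hyperplane containing $e_i$, so that $\bvarphi$ becomes a well-defined polynomial matrix on all of $T$.

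The first step is to bound $\varphi_{e_i}$ uniformly on $T$: since each $l_k(x)=h_T^{-1}\overrightarrow{AX}\cdot\bn_k$ satisfies $|l_k|\le C$ on $T$ by shape-regularity of ${\cal T}_h$, the product $\varphi_{e_i}=\prod_{k\ne i}l_k^2$ is bounded by a constant independent of $h_T$. This reduces matters to showing
\[
\int_T |\bv_b-\bv_0|^2 \, dx \;\le\; C h_T \int_{e_i} |\bv_b-\bv_0|^2 \, ds.
\]
The second step parameterizes $x\in T$ as $x=y+t\bn$ with $y$ in the hyperplane of $e_i$ and $t$ the signed normal distance. With the extension convention, the integrand depends only on $y$, and by shape-regularity the length of $\{t:y+t\bn\in T\}$ is bounded by $C h_T$. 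An application of Fubini's theorem then delivers the displayed bound.

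The main technical point is making the parameterization step rigorous on a non-convex polytope, where the normal line through $y$ may intersect $T$ in several disconnected subintervals and the projection of $T$ onto the hyperplane containing $e_i$ may strictly contain $e_i$. I would handle this by summing the contributions of the disconnected pieces (whose total length is still bounded by $C h_T$) and invoking polynomial norm equivalence on the $(d-1)$-dimensional projection to replace the integral over the projection by the integral over $e_i$, using that both regions have diameter comparable to $h_T$ and the polynomial degree is fixed at $k$. Equivalently, one may pass to a reference configuration of unit size and reduce everything to a norm equivalence on the finite-dimensional space of polynomial jumps on $\hat{e}_i$, with the standard scaling factors $h_T^d$ on both sides matching up.
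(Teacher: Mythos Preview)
Your proposal is correct and follows essentially the same route as the paper: extend the jump $\bv_b-\bv_0$ from $e_i$ to a polynomial on $T$ (the paper defers the extension details to \cite{wang1,wang2}, while you make the constant-in-normal-direction choice explicit), then use a Fubini/projection argument together with shape-regularity and polynomial norm equivalence to pass from $\int_T$ to $h_T\int_{e_i}$. The only cosmetic difference is that you bound $\varphi_{e_i}$ uniformly at the outset, whereas the paper carries it through the calculation and removes it at the end; both are valid.
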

\begin{proof}
 We first extend $\bv_b$,  defined on the $(d-1)$-dimensional edge/face  $e_i$, to the entire d-dimensional polytopal element $T$ and claim that $\bv_b$ remains  a polynomial vector  on  $T$ after extension. 
Next, let $\bv_{trace}$ denote the trace of $\bv_0$ on   $e_i$ and extend $\bv_{trace}$   to the entire element $T$. 
Like $\bv_b$, $\bv_{trace}$ remains a polynomial after the  extension.  For details on these extensions, see \cite{wang1, wang2}.

Now, define $\bvarphi=(\bv_b-\bv_0) \bn^T\varphi_{e_i}$. We have
\begin{equation*}
    \begin{split}
\|\bvarphi\|^2_T  =
\int_T \bvarphi^2dT =  &\int_T ((\bv_b-\bv_{trace})  \bn^T\varphi_{e_i})^2dT\\
\leq &Ch_T \int_{e_i} ((\bv_b-\bv_{trace})  \bn^T\varphi_{e_i})^2ds\\
\leq &Ch_T \int_{e_i} ((\bv_b-\bv_0)\bn^T)^2ds,
    \end{split}
\end{equation*} 
where we used  (1) $\varphi_{e_i}=0$ on the edge/face $e_k$ for $k \neq i$, (2) there exists a subdomain $\widehat{e_i}\subset e_i$ such that $\varphi_{e_i}\geq \rho_1$ for some constant $\rho_1>0$, and applied the properties of the projection.

 This completes the proof of the lemma.

\end{proof}

\begin{lemma}\label{phi2}
     For $\bv=\{\bv_0, \bv_b\}\in V_h$, let $\phi=(\bv_b-\bv_0) \cdot\bn \varphi_{e_i}$, where $\bn$ is the unit outward normal direction to the edge/face  $e_i$. The following inequality holds:
\begin{equation}
  \|\phi\|_T ^2 \leq Ch_T \int_{e_i}(\bv_b-\bv_0)^2ds.
\end{equation}
\end{lemma}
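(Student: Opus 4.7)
The plan is to mirror the proof of Lemma \ref{phi} almost verbatim, replacing the rank-one matrix $(\bv_b-\bv_0)\bn^T$ by its scalar contraction $(\bv_b-\bv_0)\cdot\bn$. The inequality is a scalar analogue of the previous one, and the bubble function $\varphi_{e_i}$ and its key properties (vanishing on all faces $e_k$ with $k\neq i$, and bounded below by $\rho_1$ on a subdomain $\widehat{e_i}\subset e_i$) are already in place.

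First I would extend $\bv_b$, which is a polynomial defined on the $(d-1)$-dimensional face $e_i$, to a polynomial on the full $d$-dimensional polytope $T$, and likewise extend the trace $\bv_{\text{trace}}$ of $\bv_0$ on $e_i$ to a polynomial on $T$, using the same construction cited in the proof of Lemma \ref{phi} (see \cite{wang1, wang2}). Both extensions remain polynomial, so $\phi = (\bv_b - \bv_{\text{trace}})\cdot\bn\,\varphi_{e_i}$ is a polynomial on $T$ with the property that it vanishes on every face of $T$ other than $e_i$, thanks to $\varphi_{e_i}$.

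Next I would apply the same trace/scaling argument used in Lemma \ref{phi}: since $\phi$ is a polynomial on $T$ whose support (in the sense of the factor $\varphi_{e_i}$) is anchored at $e_i$, a standard polynomial inverse-trace inequality in the direction normal to $e_i$ yields
\begin{equation*}
\|\phi\|_T^2 \;=\; \int_T \bigl((\bv_b-\bv_{\text{trace}})\cdot\bn\,\varphi_{e_i}\bigr)^2\,dT
\;\leq\; C h_T \int_{e_i} \bigl((\bv_b-\bv_{\text{trace}})\cdot\bn\,\varphi_{e_i}\bigr)^2\,ds.
\end{equation*}
On $e_i$ we have $\bv_{\text{trace}} = \bv_0$, the bubble factor satisfies $|\varphi_{e_i}|\leq C$, and $|\bn|=1$, so Cauchy--Schwarz gives $|(\bv_b-\bv_0)\cdot\bn|\leq |\bv_b-\bv_0|$, leading to
\begin{equation*}
\|\phi\|_T^2 \;\leq\; C h_T \int_{e_i} (\bv_b-\bv_0)^2\,ds,
\end{equation*}
which is the stated inequality.

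There is no genuine new obstacle here beyond what was already resolved in Lemma \ref{phi}; the only point requiring care is again the admissibility of the polynomial extension and the validity of the scaling step on a possibly non-convex polytope $T$, but both issues have already been handled by the construction of $\varphi_{e_i}$ through the linear functions $l_k(x)$ and by the extension procedure of \cite{wang1, wang2}. The proof is therefore essentially a one-line reduction to Lemma \ref{phi}.
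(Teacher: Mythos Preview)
Your proposal is correct and follows exactly the approach indicated in the paper, which simply states that the proof ``follows similarly to Lemma \ref{phi}.'' Your write-up fleshes out that one-line remark in the expected way: extend $\bv_b$ and the trace of $\bv_0$ to polynomials on $T$, apply the same scaling inequality, and then bound $|(\bv_b-\bv_0)\cdot\bn|$ by $|\bv_b-\bv_0|$ on $e_i$.
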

\begin{proof}
This proof follows similarly to Lemma  \ref{phi}.
\end{proof} 
\begin{lemma}\label{normeqva}   There exists  positive constants $C_1$ and $C_2$ such that for any $\bv=\{\bv_0, \bv_b\} \in V_h$, we have
 \begin{equation}\label{normeq}
 C_1\|\bv\|_{1, h}\leq \3bar \bv\3bar  \leq C_2\|\bv\|_{1, h}.
\end{equation}
\end{lemma}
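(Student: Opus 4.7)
The plan is to prove the two inequalities separately. The upper bound $\3bar\bv\3bar\le C_2\|\bv\|_{1,h}$ will follow from the representation formula \eqref{2.5new} together with the inverse trace inequality \eqref{trace}, while the lower bound $C_1\|\bv\|_{1,h}\le\3bar\bv\3bar$ requires bounding the jump term $h_T^{-1}\|\bv_0-\bv_b\|_{\partial T}^2$ by $\|\epsilon_w(\bv)\|_T^2$. For the volumetric parts $\|\epsilon(\bv_0)\|_T$ and $\|\nabla\cdot\bv_0\|_T$, I already have Lemmas \ref{norm1} and \ref{norm2} available.

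For the upper bound, I would take $\bvarphi=\epsilon_w(\bv)\in[P_{r_1}(T)]^{d\times d}$ in \eqref{2.5new} and use Cauchy--Schwarz on the resulting identity. The boundary term becomes $\langle\bv_b-\bv_0,\epsilon_w(\bv)\cdot\bn\rangle_{\partial T}$, which I would estimate by $\|\bv_b-\bv_0\|_{\partial T}\|\epsilon_w(\bv)\|_{\partial T}\le Ch_T^{-1/2}\|\bv_b-\bv_0\|_{\partial T}\|\epsilon_w(\bv)\|_T$ using the polynomial trace inequality \eqref{trace}. After dividing by $\|\epsilon_w(\bv)\|_T$ and squaring, summation over $T\in\T_h$ yields the bound on $\sum_T\|\epsilon_w(\bv)\|_T^2$. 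An analogous argument applied to \eqref{disdivnew} controls $\sum_T\|\nabla_w\cdot\bv\|_T^2$. Combining the two with the Lam\'e coefficients gives $\3bar\bv\3bar\le C_2\|\bv\|_{1,h}$.

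For the lower bound, Lemmas \ref{norm1} and \ref{norm2} already give $\|\epsilon(\bv_0)\|_T\le C\|\epsilon_w(\bv)\|_T$ and $\|\nabla\cdot\bv_0\|_T\le C\|\nabla_w\cdot\bv\|_T$. The remaining task is to control $h_T^{-1}\|\bv_0-\bv_b\|_{e_i}^2$ for each face $e_i$ of $T$. To this end I plan to test \eqref{2.5new} with the edge-bubble matrix $\bvarphi=(\bv_b-\bv_0)\bn^T\varphi_{e_i}$, which vanishes on every face other than $e_i$. A short calculation gives $\tfrac{1}{2}(\bvarphi+\bvarphi^T)\cdot\bn=\tfrac{1}{2}\bigl((\bv_b-\bv_0)+((\bv_b-\bv_0)\cdot\bn)\bn\bigr)\varphi_{e_i}$, so pairing with $\bv_b-\bv_0$ on $e_i$ produces the strictly positive integrand $\tfrac{1}{2}\bigl(|\bv_b-\bv_0|^2+((\bv_b-\bv_0)\cdot\bn)^2\bigr)\varphi_{e_i}$. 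Using $\varphi_{e_i}\ge\rho_1$ on the subdomain $\widehat{e_i}\subset e_i$ and the norm equivalence on polynomial spaces, this integrand is bounded below by $C\|\bv_b-\bv_0\|_{e_i}^2$. Estimating the right-hand side of the identity through Cauchy--Schwarz, Lemma \ref{phi}, and Lemma \ref{norm1} yields $C\|\bv_b-\bv_0\|_{e_i}^2\le C h_T^{1/2}\|\epsilon_w(\bv)\|_T\|\bv_b-\bv_0\|_{e_i}$, whence $h_T^{-1}\|\bv_b-\bv_0\|_{e_i}^2\le C\|\epsilon_w(\bv)\|_T^2$. Summing over faces of $T$ and over $T\in\T_h$ completes the lower bound.

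The main obstacle is the jump control in the lower bound: the test function $\bvarphi$ must simultaneously (i) vanish on all faces except $e_i$, (ii) produce a strictly positive integrand after contraction with $\bn$, and (iii) satisfy the interior $L^2$-estimate of Lemma \ref{phi}. The edge-bubble construction is precisely designed to meet these constraints, but one has to verify that the resulting boundary integrand, involving both the full vector $\bv_b-\bv_0$ and its normal component, stays non-negative and bounds $|\bv_b-\bv_0|^2\varphi_{e_i}$ from below --- this is where the symmetrization of $\bvarphi$ intervenes and is the genuinely new step beyond the scalar weak-Galerkin setting.
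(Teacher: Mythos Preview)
Your proposal is correct and follows essentially the same route as the paper: the upper bound via \eqref{2.5new}/\eqref{disdivnew} with Cauchy--Schwarz and the polynomial trace inequality \eqref{trace}, and the lower bound via Lemmas \ref{norm1}--\ref{norm2} together with the edge-bubble test $\bvarphi=(\bv_b-\bv_0)\bn^T\varphi_{e_i}$ and Lemma \ref{phi}. Your computation of the symmetrized boundary integrand $\tfrac{1}{2}\bigl(|\bv_b-\bv_0|^2+((\bv_b-\bv_0)\cdot\bn)^2\bigr)\varphi_{e_i}$ is in fact more precise than the paper, which writes $|\bv_b-\bv_0|^2\varphi_{e_i}$ directly; either form yields the needed lower bound since the expression lies between $\tfrac{1}{2}|\bv_b-\bv_0|^2\varphi_{e_i}$ and $|\bv_b-\bv_0|^2\varphi_{e_i}$.
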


\begin{proof}   
 Note that the polytopal element $T$ can be non-convex.
Recall that an edge/face-based bubble function  is defined as  $$\varphi_{e_i}= \Pi_{k=1, \cdots, N, k\neq i}l_k^2(x).$$

We extend $\bv_b$ from $e_i$ to $T$,  and similarly extend $\bv_{trace}$ (trace of $\bv_0$ on $e_i$) to $T$. We denote these extensions as  $\bv_b$ and $\bv_0$ for simplicity. Details of the extensions can be found in Lemma \ref{phi} and \cite{wang1, wang2}.

By choosing $\bvarphi=(\bv_b-\bv_0) \bn^T\varphi_{e_i}$ in \eqref{2.5new}, we have: 
\begin{equation}\label{t3} 
\begin{split}
   (\epsilon_{w} (\bv), \bvarphi)_T=&(\epsilon (\bv_0), \bvarphi)_T+
   \langle \bv_b-\bv_0,  \frac{1}{2}(\bvarphi+\bvarphi^T)\cdot \bn \rangle_{\partial T}\\  =&(\epsilon (\bv_0), \bvarphi)_T+ \int_{e_i}|\bv_b-\bv_0|^2 \varphi_{e_i}ds, 
\end{split}
  \end{equation} 
   where we used  $\varphi_{e_i}=0$ on  $e_k$ for $k \neq i$ and the fact that   $\varphi_{e_i}\geq \rho_1$ for some constant $\rho_1>0$ in a subdomain $\widehat{e_i}\subset e_i$.

   Using Cauchy-Schwarz inequality, \eqref{t3}, the domain inverse inequality \cite{wy3655}, and  Lemma \ref{phi} gives
\begin{equation*}
\begin{split}
 \int_{e_i}|\bv_b-\bv_0|^2  ds\leq &C  \int_{e_i}|\bv_b-\bv_0|^2  \varphi_{e_i}ds \\
 \leq & C(\|\epsilon_w (\bv)\|_T+\|\epsilon (\bv_0)\|_T)\| \bvarphi\|_T\\
 \leq & {Ch_T^{\frac{1}{2}} (\|\epsilon_w (\bv)\|_T+\|\epsilon (\bv_0)\|_T) (\int_{e_i}((\bv_0-\bv_b) )^2ds)^{\frac{1}{2}}},
 \end{split}
\end{equation*}
which, from Lemma \ref{norm1}, gives 
\begin{equation}\label{a1}
 h_T^{-1}\int_{e_i}|\bv_b-\bv_0|^2  ds \leq C  (\|\epsilon_w (\bv)\|^2_T+\|\epsilon (\bv_0)\|^2_T)\leq C\|\epsilon_w (\bv)\|^2_T.    
\end{equation}

Choosing $\phi=(\bv_b-\bv_0) \cdot\bn\varphi_{e_i}$ in \eqref{disdivnew}, gives
\begin{equation}\label{t4}
\begin{split}
   (\nabla_{w} \cdot \bv, \phi)_T&=(\nabla\cdot \bv_0, \phi)_T+
   \langle (\bv_b-\bv_0)\cdot \bn,  \phi \rangle_{\partial T} \\& =(\nabla\cdot \bv_0, \phi)_T+ \int_{e_i}|\bv_b-\bv_0|^2 \varphi_{e_i}ds,  
\end{split}
  \end{equation} 
   where we used (1) $\varphi_{e_i}=0$ on the edge/face $e_k$ for $k \neq i$, 
(2) there exists a subdomain $\widehat{e_i}\subset e_i$ such that $\varphi_{e_i}\geq \rho_1$ for some constant $\rho_1>0$.

   Using Cauchy-Schwarz inequality, \eqref{t4}, the domain inverse inequality \cite{wy3655}, and  Lemma \ref{phi2} gives
\begin{equation*}
\begin{split}
 \int_{e_i}|\bv_b-\bv_0|^2  ds\leq &C  \int_{e_i}|\bv_b-\bv_0|^2  \varphi_{e_i}ds \\
 \leq & C(\|\nabla_w \cdot \bv\|_T+\|\nabla\cdot\bv_0\|_T)\| \phi\|_T\\
 \leq & {Ch_T^{\frac{1}{2}} (\|\nabla_w \cdot  \bv \|_T+\|\nabla\cdot \bv_0\|_T) (\int_{e_i}(\bv_0-\bv_b)^2ds)^{\frac{1}{2}}},
 \end{split}
\end{equation*}
which, from Lemma \ref{norm2}, gives 
\begin{equation}\label{t5}
     h_T^{-1}\int_{e_i}|\bv_b-\bv_0|^2  ds \leq C  (\|\nabla_w \cdot  \bv\|^2_T+\|\nabla\cdot \bv_0\|^2_T)\leq C\|\nabla_w \cdot  \bv\|^2_T.
\end{equation}

This, together with \eqref{a1}, \eqref{t5}, Lemmas \ref{norm1}-\ref{norm2},  \eqref{3norm} and \eqref{disnorm}, gives
$$
 C_1\|\bv\|_{1, h}\leq \3bar \bv\3bar.
$$

Next, from \eqref{2.5new}, Cauchy-Schwarz inequality and  the trace inequality \eqref{trace}, we conclude:
$$
 \Big|(\epsilon_{w} (\bv), \bvarphi)_T\Big| \leq \|\epsilon (\bv_0)\|_T \|  \bvarphi\|_T+
Ch_T^{-\frac{1}{2}}\|\bv_b-\bv_0\|_{\partial T} \| (\bvarphi+\bvarphi^T)\cdot\bn\|_{T},
$$
which yields
\begin{equation}\label{a2}
\| \epsilon_{w} (\bv)\|_T^2\leq C( \|\epsilon (\bv_0)\|^2_T  +
 h_T^{-1}\|\bv_b-\bv_0\|^2_{\partial T}).
\end{equation}

From \eqref{disdivnew}, Cauchy-Schwarz inequality and  the trace inequality \eqref{trace}, we have
$$
 \Big|(\nabla_{w} \cdot \bv, \phi)_T\Big| \leq \|\nabla\cdot \bv_0 \|_T \|  \phi\|_T+
Ch_T^{-\frac{1}{2}}\|(\bv_b-\bv_0)\cdot\bn\|_{\partial T} \| \phi \|_{T},
$$
which yields
\begin{equation}\label{a3}
\| \nabla_{w}\cdot \bv\|_T^2\leq C( \|\nabla
\cdot\bv_0\|^2_T  +
 h_T^{-1}\|\bv_b-\bv_0\|^2_{\partial T}),    
\end{equation}

Using \eqref{a2}-\eqref{a3}, \eqref{3norm} and \eqref{disnorm}, gives $$ \3bar \bv\3bar  \leq C_2\|\bv\|_{1, h}.$$

 This completes the proof of the lemma.
 \end{proof}

  \begin{remark}
   If the polytopal element $T$  is convex, 
  the edge/face-based bubble function in Lemmas \ref{phi}-\ref{normeqva}  simplifies to
$$\varphi_{e_i}= \Pi_{k=1, \cdots, N, k\neq i}l_k(x).$$
It can be verified that (1)  $\varphi_{e_i}=0$ on the edge/face $e_k$ for $k \neq i$, (2) there exists a subdomain $\widehat{e_i}\subset e_i$ such that $\varphi_{e_i}\geq \rho_1$ for some constant $\rho_1>0$. Lemmas \ref{phi}-\ref{normeqva}  can be derived similarly using this simplified construction.
\end{remark}

\begin{remark}
For any  $d$-dimensional polytopal element $T$,  
 there exists a hyperplane $H\subset R^d$  such that a finite number $l$ of distinct $(d-1)$-dimensional edges/faces containing $e_{i}$ are contained within $H$.  
 In such cases, Lemmas \ref{phi}-\ref{normeqva} can be proved with additional techniques. For more details, see \cite{wang1, wang2}, which can be generalized to Lemmas  \ref{phi}-\ref{normeqva}.
 \end{remark}

\begin{lemma}\cite{wg10, wg18} (Second Korn's Inequality)
Let $\Omega$  be a connected, open, bounded domain with a Lipschitz continuous boundary. Assume $\Gamma_1\subset \partial\Omega$ is a nontrivial portion of the boundary $\partial\Omega$, with dimension $d-1$. For any fixed real number $1\leq p<\infty$, there exists a constant $C$ such that
\begin{equation}\label{korn}
 \|\bv\|_1\leq C(\|\epsilon(\bv)\|_0+\|\bv\|_{L^p(\Gamma_1)}), 
\end{equation}
for any $\bv\in [H^1(\Omega)]^d$.
\end{lemma}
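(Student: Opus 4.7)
The statement is the classical second Korn inequality, so the natural plan is a contradiction/compactness argument of Peetre--Tartar type, reducing to the characterization of the kernel of $\epsilon(\cdot)$.

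The plan is to argue by contradiction. Suppose the stated inequality fails. Then there exists a sequence $\{\bv_n\}\subset [H^1(\Omega)]^d$ with the normalization $\|\bv_n\|_1=1$ such that
\begin{equation*}
\|\epsilon(\bv_n)\|_0+\|\bv_n\|_{L^p(\Gamma_1)}\to 0.
\end{equation*}
Since $\{\bv_n\}$ is bounded in $[H^1(\Omega)]^d$, the Rellich--Kondrachov theorem (using that $\Omega$ is a bounded Lipschitz domain) yields a subsequence, still denoted $\{\bv_n\}$, converging weakly in $[H^1(\Omega)]^d$ to some limit $\bv$ and strongly in $[L^2(\Omega)]^d$. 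The first Korn inequality,
\begin{equation*}
\|\bv\|_1 \leq C\big(\|\epsilon(\bv)\|_0+\|\bv\|_0\big),
\end{equation*}
together with the strong $L^2$ convergence of $\{\bv_n\}$ and the Cauchy condition it induces on $\{\epsilon(\bv_n)\}$, upgrades the weak convergence to strong convergence in $[H^1(\Omega)]^d$. Hence $\|\bv\|_1=1$ and simultaneously $\epsilon(\bv)=0$ in $\Omega$.

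The next step is to identify $\bv$. It is standard that $\epsilon(\bv)=0$ on a connected domain forces $\bv$ to be a rigid motion, namely $\bv(x)=\bba+Bx$ for some constant vector $\bba\in\mathbb{R}^d$ and constant skew-symmetric matrix $B\in\mathbb{R}^{d\times d}$; in particular $\bv$ is smooth up to the boundary. By the trace theorem, $\bv_n|_{\Gamma_1}\to \bv|_{\Gamma_1}$ in $[L^p(\Gamma_1)]^d$, so $\bv$ vanishes on $\Gamma_1$. Since $\Gamma_1$ is a nontrivial $(d-1)$-dimensional piece of $\partial\Omega$, it contains $d$ affinely independent points; plugging these into $\bba+Bx=0$ forces $\bba=0$ and $B=0$. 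Therefore $\bv\equiv 0$, contradicting $\|\bv\|_1=1$ and completing the proof.

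The main obstacle is the passage from weak to strong $H^1$ convergence along the subsequence: one has to combine the Rellich compactness with the first Korn inequality applied to the differences $\bv_n-\bv_m$ (so that $\|\epsilon(\bv_n-\bv_m)\|_0\to 0$ and $\|\bv_n-\bv_m\|_0\to 0$ imply $\|\bv_n-\bv_m\|_1\to 0$). Everything else--characterizing rigid motions and using the $(d-1)$-dimensional hypothesis on $\Gamma_1$ to rule out nontrivial ones--is routine. Because this lemma is a classical analytic result, no feature of the WG discretization enters the proof, and the cited references \cite{wg10, wg18} already contain the full argument.
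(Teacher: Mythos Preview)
The paper does not give its own proof of this lemma; it merely states the inequality and cites \cite{wg10, wg18}. So there is nothing to compare against in the paper itself. Your compactness/contradiction argument via the first Korn inequality and the characterization of the kernel of $\epsilon(\cdot)$ as rigid motions is the standard route and is correct; the only point to handle with a little care is the trace step for general $1\le p<\infty$ (strong $H^1$ convergence gives convergence of traces in $L^2(\Gamma_1)$, and combining this with $\|\bv_n\|_{L^p(\Gamma_1)}\to 0$ yields $\bv=0$ on $\Gamma_1$ by an elementary inclusion argument on the bounded set $\Gamma_1$).
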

 
\begin{theorem}
The  Weak Galerkin  Algorithm without Stabilizers \ref{PDWG1} has  a unique solution. 
\end{theorem}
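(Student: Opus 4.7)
The plan is to reduce existence to uniqueness, since \eqref{WG} is a square linear system on the finite-dimensional space $V_h^0$. Assuming homogeneous data $\bf \equiv \mathbf{0}$ and $\bg \equiv \mathbf{0}$, it suffices to show that the only solution $\bu_h = \{\bu_0, \bu_b\}$ satisfying $\bu_b = Q_b \bg = \mathbf{0}$ on $\partial\Omega$ is $\bu_h \equiv \mathbf{0}$.

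First, I would take $\bv = \bu_h \in V_h^0$ in \eqref{WG}; using \eqref{3norm} this yields $\3bar \bu_h \3bar = 0$, and the norm equivalence of Lemma \ref{normeqva} upgrades this to $\|\bu_h\|_{1,h} = 0$. Reading off the definition \eqref{disnorm}, we obtain on every $T \in \T_h$ the three identities
\[
\epsilon(\bu_0) = \mathbf{0}, \qquad \nabla\cdot\bu_0 = 0, \qquad \bu_0 = \bu_b \ \text{on}\ \partial T.
\]
The first identity says $\bu_0|_T$ is a rigid body motion. The third, combined with the single-valuedness of $\bu_b$ on every interior face $e \in \E_h^0$, forces the traces of $\bu_0$ from adjacent elements to coincide, so the piecewise smooth field $\bu_0$ in fact lies in $[H^1(\Omega)]^d$; moreover, on the outer boundary $\bu_0 = \bu_b = Q_b \bg = \mathbf{0}$.

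With $\bu_0 \in [H^1(\Omega)]^d$, $\epsilon(\bu_0) \equiv \mathbf{0}$ in $\Omega$, and $\bu_0|_{\partial\Omega} = \mathbf{0}$, the second Korn inequality \eqref{korn} applied with $\Gamma_1 = \partial\Omega$ and $p = 2$ gives
\[
\|\bu_0\|_1 \le C\bigl(\|\epsilon(\bu_0)\|_0 + \|\bu_0\|_{L^2(\partial\Omega)}\bigr) = 0,
\]
so $\bu_0 \equiv \mathbf{0}$ in $\Omega$. Combined once more with $\bu_0 = \bu_b$ on every $\partial T$, we conclude $\bu_b \equiv \mathbf{0}$ on the entire skeleton, whence $\bu_h \equiv \mathbf{0}$.

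The main obstacle is the regularity claim in the second step: confirming that the piecewise polynomial vector $\bu_0$, a priori defined only element by element, truly patches together into a globally $H^1(\Omega)$-regular function so that the continuous Korn inequality becomes applicable. This hinges entirely on the trace identity $\bu_0 = \bu_b$ on $\partial T$ together with the single-valuedness of $\bu_b$ across interior faces of $\E_h^0$; once this continuity is tracked carefully, the remainder of the argument is a routine chain of implications driven by Lemma \ref{normeqva} and Korn's inequality, with finite-dimensional linear algebra then converting uniqueness into existence.
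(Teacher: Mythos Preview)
Your proposal is correct and follows essentially the same route as the paper: reduce to uniqueness by finite dimensionality, take $\bv=\bu_h$ to obtain $\3bar\bu_h\3bar=0$, invoke Lemma~\ref{normeqva} to pass to $\|\bu_h\|_{1,h}=0$, read off $\epsilon(\bu_0)=0$, $\bu_0=\bu_b$ on $\partial T$, patch the piecewise rigid motions into a global $H^1$ function vanishing on $\partial\Omega$, and finish with the second Korn inequality \eqref{korn}. The paper phrases uniqueness via the difference of two solutions rather than the homogeneous problem, but the argument is otherwise identical.
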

\begin{proof} 
 Since the number of equations equals the number of unknowns in \eqref{WG}, it suffices to prove the uniqueness of the solution.
Assume $\bu_h^{(1)}\in V_h$ and $\bu_h^{(2)}\in V_h$ are two distinct  solutions of the WG Algorithm  \ref{PDWG1}. Define $\beta_h= \bu_h^{(1)}-
\bu_h^{(2)}\in V_h^0$. Then, $\beta_h$ satisfies the equation
$$
\sum_{T\in {\cal T}_h} 2(\mu \epsilon_w(\beta_h), \epsilon_w(\bv))_T+(\lambda \nabla_w \cdot \beta_h, \nabla_w\cdot\bv)_T=0.
$$
By setting $\bv=\beta_h$,  we obtain $\3bar \beta_h\3bar=0$. Using the fact that $\3bar \beta_h\3bar=0$ and \eqref{normeq}, we have
$$\|\beta_h\|_{1,h}=0.$$ 
This implies that $\epsilon(\beta_0)=0$ and $\nabla\cdot\beta_0=0$ on each element $T$, and that $\beta_0=\beta_b$ on each $\partial T$.

Since $\epsilon(\beta_0)=0$ on each element  $T$, it follows that $\beta_0\in RM(T)\subset [P_1(T)]^d$. Consequently, $\beta_0=\beta_b$ on each $\partial T$ and $\beta_0$ is continuous across the domain $\Omega$.  Using the fact that  $\beta_b=0$ on $\partial\Omega$, we conclude that $\beta_0=0$ on $\partial\Omega$. Applying the second Korn's inequality \eqref{korn}, we conclude that $\beta_0\equiv 0$ in $\Omega$. Since   $\beta_0=\beta_b$ on each $\partial T$, we have $\beta_b\equiv 0$ in $\Omega$. Therefore,  $\bu_h^{(1)}=\bu_h^{(2)}$. 

Finally, note that this result holds for any $\lambda\geq 0$, completing the proof. 
\end{proof}

\section{Error Equations}
On each element $T\in\T_h$, let $Q_0$ denote the $L^2$ projection onto $P_k(T)$. On each  edge/face  $e\subset\partial T$, recall that $Q_b$ is the $L^2$ projection operator onto $P_k(e)$. For any $\bw\in [H^1(\Omega)]^d$, we define the $L^2$ projection into the weak finite element space $V_h$, denote by $Q_h \bw$, such that
$$
(Q_h\bw)|_T:=\{Q_0(\bw|_T),Q_b(\bw|_{\pT})\},\qquad \forall T\in\T_h.
$$
We also denote by $Q_{r_1}$ and $Q_{r_2}$ the $L^2$ projection operators onto the finite element spaces of piecewise polynomials of degrees $r_1$ and $r_2$ respectively.

\begin{lemma} \label{pror1}The following property holds:
\begin{equation}\label{pro}
\epsilon_{w}(\bw) =Q_{r_1}\epsilon (\bw), \qquad \forall \bw\in [H^1(T)]^d.
\end{equation}
\end{lemma}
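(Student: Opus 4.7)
The plan is to reduce the identity to the defining property of the discrete weak strain tensor by exploiting the fact that, when $\bw\in[H^1(T)]^d$ is viewed as the weak function $\{\bw,\bw|_{\partial T}\}$, its interior and boundary parts agree on $\partial T$, so the surface term in \eqref{2.5new} disappears.

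First I would substitute $\bv_0=\bw$ and $\bv_b=\bw|_{\partial T}$ into the integration-by-parts form \eqref{2.5new}, which is applicable since $\bw\in[H^1(T)]^d$. Because $\bv_b-\bv_0=0$ pointwise on $\partial T$, the boundary integral vanishes identically, and one obtains
\begin{equation*}
(\epsilon_{w,r_1,T}(\bw),\bvarphi)_T=(\epsilon(\bw),\bvarphi)_T,\qquad\forall\,\bvarphi\in[P_{r_1}(T)]^{d\times d}.
\end{equation*}

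Next I would invoke the defining property of the $L^2$ projection $Q_{r_1}$ onto $[P_{r_1}(T)]^{d\times d}$ to rewrite the right-hand side as $(Q_{r_1}\epsilon(\bw),\bvarphi)_T$. Subtracting the two identities gives
\begin{equation*}
(\epsilon_{w,r_1,T}(\bw)-Q_{r_1}\epsilon(\bw),\bvarphi)_T=0,\qquad\forall\,\bvarphi\in[P_{r_1}(T)]^{d\times d}.
\end{equation*}
Since $\epsilon_{w,r_1,T}(\bw)\in[P_{r_1}(T)]^{d\times d}$ by construction and $Q_{r_1}\epsilon(\bw)\in[P_{r_1}(T)]^{d\times d}$ by definition, the difference lies in this polynomial space, and taking $\bvarphi$ equal to this difference yields $\|\epsilon_{w,r_1,T}(\bw)-Q_{r_1}\epsilon(\bw)\|_T^2=0$, which establishes \eqref{pro}.

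There is no real obstacle here: the argument is a direct two-line application of the definitions, and the only point worth checking is symmetry, which is preserved because $Q_{r_1}$ acts componentwise and $\epsilon(\bw)$ is symmetric, matching the built-in symmetry of $\epsilon_{w,r_1,T}(\bw)$.
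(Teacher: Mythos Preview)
Your proof is correct and follows essentially the same route as the paper: both invoke \eqref{2.5new} with $\bv_0=\bw$, $\bv_b=\bw|_{\partial T}$ so the boundary term vanishes, then identify $(\epsilon(\bw),\bvarphi)_T$ with $(Q_{r_1}\epsilon(\bw),\bvarphi)_T$ via the projection property. The paper simply stops at this identity for all $\bvarphi\in[P_{r_1}(T)]^{d\times d}$, while you spell out the final step of choosing $\bvarphi$ to be the difference; the argument is otherwise identical.
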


\begin{proof} For any $\bw\in [H^1(T)]^d$,  using \eqref{2.5new}, we obtain 
 \begin{equation*} 
 \begin{split}
  (\epsilon_{w} (\bw),\bvarphi)_T=&(\epsilon (\bw), \bvarphi)_T+
  \langle \bw|_{\partial T}-\bw|_T, \frac{1}{2}(\bvarphi+\bvarphi^T) \cdot \bn \rangle_{\partial T}\\=&(\epsilon (\bw), \bvarphi)_T\\
  =&(Q_{r_1}\epsilon (\bw), \bvarphi)_T, 
  \end{split}
  \end{equation*}   
  for any $\bvarphi\in [P_{r_1}(T)]^{d\times d}$.  This 
     completes the proof of this lemma.
  \end{proof}

\begin{lemma} \label{pror2}   The following property holds:
\begin{equation}\label{pro2}
\nabla_w\cdot \bw  =Q_{r_2}(\nabla \cdot  \bw), \qquad \forall \bw\in [H^1(T)]^d.
\end{equation}
\end{lemma}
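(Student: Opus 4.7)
The plan is to mirror the proof of Lemma \ref{pror1} almost verbatim, with the discrete weak divergence identity \eqref{disdivnew} playing the role that the discrete weak strain identity \eqref{2.5new} played before. The key observation is that when $\bw \in [H^1(T)]^d$ is a genuine (single-valued) function, it is naturally identified with the weak function $\{\bw|_T, \bw|_{\partial T}\}$, so the jump term $\bv_b - \bv_0$ in \eqref{disdivnew} collapses to zero on $\partial T$.

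More concretely, I would start from \eqref{disdivnew} with $\bv = \bw$ (interpreted as $\{\bw|_T, \bw|_{\partial T}\}$), which reads
\begin{equation*}
(\nabla_{w,r_2,T}\cdot \bw, \phi)_T = (\nabla\cdot \bw, \phi)_T + \langle (\bw|_{\partial T}-\bw|_T)\cdot \bn, \phi\rangle_{\partial T}
\end{equation*}
for every $\phi \in P_{r_2}(T)$. The boundary integral vanishes because $\bw|_{\partial T} = \bw|_T$ on $\partial T$ in the trace sense. Next, I would use the defining property of the $L^2$ projection $Q_{r_2}$ to replace $\nabla \cdot \bw$ by $Q_{r_2}(\nabla \cdot \bw)$ against the test polynomial $\phi \in P_{r_2}(T)$, obtaining
\begin{equation*}
(\nabla_{w}\cdot \bw, \phi)_T = (Q_{r_2}(\nabla \cdot \bw), \phi)_T, \qquad \forall \phi \in P_{r_2}(T).
\end{equation*}

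Finally, since both $\nabla_w \cdot \bw$ and $Q_{r_2}(\nabla \cdot \bw)$ lie in $P_{r_2}(T)$, the arbitrariness of $\phi$ forces their equality, which is exactly \eqref{pro2}. There is no real obstacle here; the argument is a direct transcription of the previous lemma with vectorial $\bvarphi$ replaced by scalar $\phi$, and $\epsilon$ replaced by $\nabla \cdot$. The only thing I would be careful about is making the identification $\bw \leftrightarrow \{\bw|_T, \bw|_{\partial T}\} \in W(T)$ explicit so that the use of \eqref{disdivnew} is rigorously justified.
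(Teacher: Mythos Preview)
Your proposal is correct and mirrors exactly what the paper does: the paper simply states that the proof follows similarly to that of Lemma~\ref{pror1}, and your argument is precisely the expected transcription---apply \eqref{disdivnew} to $\bw=\{\bw|_T,\bw|_{\partial T}\}$, drop the vanishing boundary term, and invoke the defining property of $Q_{r_2}$.
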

\begin{proof}
 The proof of this lemma follows similarly to that of Lemma \ref{pror1}.   
\end{proof}

Let  $\bu$ and $\bu_h \in V_{h}$ denote the exact solution of the elasticity problem \eqref{model} and its numerical approximation arising from the WG Algorithm  \ref{PDWG1}, respectively. We define the error function $\be_h$ as follows
\begin{equation}\label{error} 
\be_h=\bu-\bu_h  \in V_{h}^0.
\end{equation}

\begin{lemma}\label{errorequa}
The error function $\be_h$ given in (\ref{error}) satisfies the following error equation:
\begin{equation}\label{erroreqn}
\sum_{T\in {\cal T}_h}(2\mu\epsilon_w (\be_h), \epsilon_w (\bv))_T+(\lambda \nabla_w \cdot \be_h, \nabla_w \cdot \bv)_T =\ell (\bu, \bv), \qquad \forall \bv\in V_h^0,
\end{equation}
where 
$$
\ell (\bu, \bv)=\sum_{T\in {\cal T}_h}  \langle   \bv_b-\bv_0, 2\mu( Q_{r_1}-I)  \epsilon (\bu) \cdot\bn\rangle_{\partial T} +\langle (\bv_b-\bv_0)\cdot\bn, \lambda   (Q_{r_2} -I)(\nabla  \cdot \bu)\rangle_{\partial T}.
$$
\end{lemma}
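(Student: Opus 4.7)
The plan is to prove the identity
$$\sum_{T\in{\cal T}_h} 2\mu(\epsilon_w(\bu), \epsilon_w(\bv))_T + \lambda(\nabla_w\cdot\bu, \nabla_w\cdot\bv)_T = (\bf, \bv_0) + \ell(\bu, \bv), \qquad \forall \bv\in V_h^0,$$
with $\bu$ viewed as the weak function $\{\bu, \bu|_{\partial T}\}$ on each element. Since $\bu_h$ satisfies \eqref{WG} and both $\epsilon_w$ and $\nabla_w\cdot$ are linear, subtracting \eqref{WG} from this identity yields \eqref{erroreqn} directly.

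First, I would test the strong form \eqref{model} against $\bv_0$ elementwise, integrate by parts, and sum over $T\in{\cal T}_h$. Using the continuity of $\sigma(\bu)\bn$ across interior faces together with $\bv_b=0$ on $\partial\Omega$, the jump term may be rewritten so that $\bv_0$ is replaced by $\bv_0-\bv_b$, producing
$$\sum_T 2\mu(\epsilon(\bu), \epsilon(\bv_0))_T + \lambda(\nabla\cdot\bu, \nabla\cdot\bv_0)_T = (\bf, \bv_0) + \sum_T \langle \sigma(\bu)\bn, \bv_0 - \bv_b\rangle_{\partial T}.$$

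Next, to recast the volume terms in terms of the discrete weak operators, I would substitute the symmetric polynomial $\bvarphi = Q_{r_1}\epsilon(\bu) \in [P_{r_1}(T)]^{d\times d}$ into \eqref{2.5new} (applied to $\bv$) and $\phi = Q_{r_2}(\nabla\cdot\bu) \in P_{r_2}(T)$ into \eqref{disdivnew}. Because $\epsilon(\bv_0)\in[P_{k-1}(T)]^{d\times d}$ and $\nabla\cdot\bv_0\in P_{k-1}(T)$ lie in the ranges of $Q_{r_1}$ and $Q_{r_2}$ respectively, the projections act as the identity and produce
$$(\epsilon(\bv_0), \epsilon(\bu))_T = (\epsilon_w(\bv), Q_{r_1}\epsilon(\bu))_T - \langle \bv_b - \bv_0, Q_{r_1}\epsilon(\bu)\cdot\bn\rangle_{\partial T},$$
together with the analogous divergence identity.

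Substituting these into the relation from the first step and splitting $\sigma(\bu)\bn = 2\mu\epsilon(\bu)\bn + \lambda(\nabla\cdot\bu)\bn$, all boundary contributions collect into
$$\sum_T 2\mu\langle \bv_b - \bv_0, (Q_{r_1}-I)\epsilon(\bu)\cdot\bn\rangle_{\partial T} + \lambda\langle (\bv_b-\bv_0)\cdot\bn, (Q_{r_2}-I)(\nabla\cdot\bu)\rangle_{\partial T} = \ell(\bu, \bv),$$
and the volume terms are converted, via Lemmas \ref{pror1}--\ref{pror2}, to $\sum_T 2\mu(\epsilon_w(\bv), \epsilon_w(\bu))_T + \lambda(\nabla_w\cdot\bv, \nabla_w\cdot\bu)_T$. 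The main delicacy is bookkeeping the signs as $\sigma(\bu)\bn$ is decomposed and the two projection operators are passed through the boundary inner products; one must consistently flip between $(Q_{r_i}-I)$ and $(I-Q_{r_i})$ while keeping track of the $\bv_0\leftrightarrow\bv_b$ swap. The argument is otherwise routine, relying only on integration by parts, the $L^2$-orthogonality of the projections against low-degree polynomials, and the inter-element continuity of $\sigma(\bu)\bn$ that justifies introducing $\bv_b$ in the first place.
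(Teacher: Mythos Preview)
Your proposal is correct and follows essentially the same approach as the paper: both arguments insert $\bvarphi=Q_{r_1}\epsilon(\bu)$ into \eqref{2.5new} and $\phi=Q_{r_2}(\nabla\cdot\bu)$ into \eqref{disdivnew}, invoke Lemmas~\ref{pror1}--\ref{pror2}, use $\bv_b=0$ on $\partial\Omega$ to cancel the boundary terms, and subtract \eqref{WG}. The only difference is organizational---the paper starts from $\sum_T 2\mu(\epsilon_w(\bu),\epsilon_w(\bv))_T+\lambda(\nabla_w\cdot\bu,\nabla_w\cdot\bv)_T$ and works toward $(\bf,\bv_0)+\ell(\bu,\bv)$, whereas you begin by testing the strong form and work in the opposite direction.
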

\begin{proof}   Using \eqref{pro}, \eqref{pro2}, and letting $\bvarphi= Q_{r_1} \epsilon(\bu)$ in \eqref{2.5new} and $\phi=Q_{r_2} (\nabla\cdot\bu)$ in \eqref{disdivnew}, gives
\begin{equation}\label{54}
\begin{split}
&\sum_{T\in {\cal T}_h}(2\mu\epsilon_w (\bu), \epsilon_w (\bv))_T+(\lambda \nabla_w \cdot \bu, \nabla_w \cdot \bv)_T\\=&\sum_{T\in {\cal T}_h} (2\mu Q_{r_1} \epsilon (\bu), \epsilon_w \bv)_T+(\lambda Q_{r_2} (\nabla  \cdot \bu), \nabla_w \cdot \bv)_T\\
=&\sum_{T\in {\cal T}_h} (2\mu\epsilon(\bv_0),   Q_{r_1} \epsilon (\bu))_T+ \langle   2\mu(\bv_b-\bv_0), \frac{1}{2}( Q_{r_1} \epsilon (\bu)+Q_{r_1} \epsilon (\bu)^T) \cdot\bn\rangle_{\partial T}\\&+  (\lambda \nabla\cdot \bv_0,  Q_{r_2} (\nabla  \cdot \bu))_T+ \langle \lambda (\bv_b-\bv_0)\cdot\bn,   Q_{r_2} (\nabla  \cdot \bu)\rangle_{\partial T}\\
=&\sum_{T\in {\cal T}_h} (2\mu\epsilon(\bv_0),     \epsilon (\bu))_T+ \langle   2\mu(\bv_b-\bv_0),  Q_{r_1}  \epsilon (\bu) \cdot\bn\rangle_{\partial T}\\&+   (\lambda \nabla\cdot \bv_0,     \nabla  \cdot \bu)_T+ \langle \lambda (\bv_b-\bv_0)\cdot\bn,    Q_{r_2} (\nabla  \cdot \bu)\rangle_{\partial T}\\
=&\sum_{T\in {\cal T}_h}(\bf,\bv_0) +\langle 2\mu \epsilon(\bu)\cdot\bn, \bv_0\rangle_{\partial T}+\langle \lambda \nabla\cdot\bu, \bv_0\cdot\bn\rangle_{\partial T}\\&+\langle   \bv_b-\bv_0, 2\mu Q_{r_1}  \epsilon (\bu) \cdot\bn\rangle_{\partial T} +\langle (\bv_b-\bv_0)\cdot\bn, \lambda   Q_{r_2} (\nabla  \cdot \bu)\rangle_{\partial T}\\
=&\sum_{T\in {\cal T}_h}(\bf,\bv_0)  +\langle   \bv_b-\bv_0, 2\mu( Q_{r_1}-I)  \epsilon (\bu) \cdot\bn\rangle_{\partial T} \\&+\langle (\bv_b-\bv_0)\cdot\bn, \lambda   (Q_{r_2} -I)(\nabla  \cdot \bu)\rangle_{\partial T},
\end{split}
\end{equation}
where we used \eqref{model}, the usual integration by parts, and the fact that $\sum_{T\in {\cal T}_h} \langle 2\mu \epsilon(\bu)\cdot\bn, \bv_b\rangle_{\partial T} = \langle 2\mu \epsilon(\bu)\cdot\bn, \bv_b\rangle_{\partial \Omega}=0$ and $\sum_{T\in {\cal T}_h}\langle   \lambda \nabla\cdot\bu, \bv_b\cdot\bn\rangle_{\partial T} = \langle \lambda \nabla\cdot\bu, \bv_b\cdot\bn\rangle_{\partial \Omega}=0$  since $\bv_b=0$ on $\partial \Omega$.  

Finally, subtracting \eqref{WG} from \eqref{54}   completes the proof of the lemma.
\end{proof}

\section{Error Estimates}

\begin{lemma}\cite{wy3655}
Let ${\cal T}_h$ be a finite element partition of the domain $\Omega$ that satisfies the shape regular assumption  outlined in  \cite{wy3655}. For any $0\leq s \leq 1$, $0\leq m \leq k+1$, and $0\leq n \leq k$, there holds
\begin{eqnarray}\label{error1}
 \sum_{T\in {\cal T}_h}h_T^{2s}\|\epsilon (\bu)- Q_{r_1} \epsilon( \bu)\|^2_{s,T}&\leq& C  h^{2m-2}\|\bu\|^2_{m},\\
 \label{error3} \sum_{T\in {\cal T}_h}h_T^{2s}\|\nabla\cdot \bu- Q_{r_2} \nabla\cdot \bu\|^2_{s,T}&\leq& C  h^{2m-2}\|\bu\|^2_{m},\\
\label{error2}
\sum_{T\in {\cal T}_h}h_T^{2s}\|\bu- Q _0\bu\|^2_{s,T}&\leq& C h^{2n+2}\|\bu\|^2_{n+1}.
\end{eqnarray}
 \end{lemma}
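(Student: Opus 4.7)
The plan is to reduce each of the three estimates to the standard local approximation property of the $L^2$ projection on a shape-regular polytopal element, and then sum over the partition. All three inequalities share the same template, so one basic local bound will suffice.

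First, I would fix an arbitrary element $T\in{\cal T}_h$ and observe that for any integer $r\geq 0$ the $L^2$ projection onto $P_r(T)$ reproduces polynomials of degree $\leq r$. Combining this with the Bramble--Hilbert lemma on a reference configuration, and then with the scaling argument valid under the shape-regularity assumption of \cite{wy3655}, one obtains the local approximation inequality
$$
\|f-Q_r f\|_{s,T}\leq C\,h_T^{t-s}\,\|f\|_{t,T},\qquad 0\leq s\leq t\leq r+1,
$$
for any scalar $f\in H^t(T)$, as well as its vector- and tensor-valued analogues.

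Next I would apply this template in three specific settings. For \eqref{error1} I take $f=\epsilon(\bu)\in[H^{m-1}(T)]^{d\times d}$, use $\|\epsilon(\bu)\|_{m-1,T}\leq C\|\bu\|_{m,T}$, and note that $r_1=2N+k-1\geq m-2$ for $m\leq k+1$, so the template yields $h_T^{2s}\|\epsilon(\bu)-Q_{r_1}\epsilon(\bu)\|^2_{s,T}\leq C h_T^{2(m-1)}\|\bu\|^2_{m,T}$. For \eqref{error3} I repeat the argument with $f=\nabla\cdot\bu\in H^{m-1}(T)$ and the same generous choice of $r_2$. For \eqref{error2} I take $f=\bu\in[H^{n+1}(T)]^d$ and use that $Q_0$ projects onto $P_k(T)$ with $n\leq k$, yielding $h_T^{2s}\|\bu-Q_0\bu\|^2_{s,T}\leq C h_T^{2(n+1)}\|\bu\|^2_{n+1,T}$. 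Summing each of these over $T\in{\cal T}_h$ and using $h_T\leq h$ delivers the three claimed global bounds.

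The only genuine difficulty is establishing the local approximation inequality on a possibly non-convex polytopal element, where the usual reference-element scaling requires care because no single affine reference map is available. This is, however, exactly what the shape-regularity framework of \cite{wy3655} is designed to handle; once that machinery is invoked, the remainder of the proof is routine tracking of Sobolev indices and powers of $h_T$.
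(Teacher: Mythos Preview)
The paper does not actually prove this lemma: it is stated with the citation \cite{wy3655} and no proof is given, so there is no in-paper argument to compare your proposal against. Your outline (local $L^2$ projection approximation via Bramble--Hilbert and scaling, applied to $\epsilon(\bu)$, $\nabla\cdot\bu$, and $\bu$ separately, then summed) is exactly the standard way such estimates are established in the shape-regular polytopal setting of \cite{wy3655}, and is correct as a sketch. In effect you have supplied what the paper simply imports by reference.
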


\begin{lemma}
Assume  the exact solution $\bu$ of the elasticity problem \eqref{model} is sufficiently regular, such that $\bu\in [H^{k+1} (\Omega)]^d$. Then, there exists a constant $C$ such that the following estimate holds:
\begin{equation}\label{erroresti1}
\3bar \bu-Q_h\bu \3bar \leq Ch^k\|\bu\|_{k+1}.
\end{equation}
\end{lemma}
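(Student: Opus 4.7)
The plan is to treat $\bu - Q_h\bu$ as a weak function on each element $T$ with interior component $\bu - Q_0\bu$ and boundary component $\bu|_{\partial T} - Q_b\bu$, and to estimate its discrete weak strain tensor and weak divergence by applying the integration-by-parts identities \eqref{2.5new} and \eqref{disdivnew} directly. Using that $(\bu|_{\partial T} - Q_b\bu) - (\bu - Q_0\bu)|_{\partial T} = Q_0\bu - Q_b\bu$, identity \eqref{2.5new} yields
$$
(\epsilon_w(\bu - Q_h\bu), \bvarphi)_T = (\epsilon(\bu - Q_0\bu), \bvarphi)_T + \langle Q_0\bu - Q_b\bu, \tfrac{1}{2}(\bvarphi + \bvarphi^T)\cdot\bn\rangle_{\partial T},
$$
and an analogous identity from \eqref{disdivnew} gives the weak divergence in terms of $\nabla\cdot(\bu - Q_0\bu)$ and $(Q_0\bu - Q_b\bu)\cdot\bn$.

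Specializing the first identity to the symmetric test function $\bvarphi = \epsilon_w(\bu - Q_h\bu) \in [P_{r_1}(T)]^{d\times d}$ and applying Cauchy--Schwarz together with the polynomial trace inequality \eqref{trace} to transfer the $\partial T$-norm of $\epsilon_w(\bu - Q_h\bu)$ onto its $T$-norm with a factor $h_T^{-1/2}$, I obtain
$$
\|\epsilon_w(\bu - Q_h\bu)\|_T \le C\bigl(\|\epsilon(\bu - Q_0\bu)\|_T + h_T^{-1/2}\|Q_0\bu - Q_b\bu\|_{\partial T}\bigr),
$$
and an analogous bound for $\|\nabla_w\cdot(\bu - Q_h\bu)\|_T$ with $\epsilon$ replaced by $\nabla\cdot$.

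Squaring and summing over $\T_h$ reduces the proof to estimating three global sums. The interior terms $\sum_T \|\epsilon(\bu - Q_0\bu)\|_T^2$ and $\sum_T \|\nabla\cdot(\bu - Q_0\bu)\|_T^2$ are each bounded by $Ch^{2k}\|\bu\|_{k+1}^2$ via \eqref{error2} with $n=k$ and $s=1$, together with the trivial pointwise inequality $|\epsilon(\cdot)|, |\nabla\cdot(\cdot)| \le |\nabla(\cdot)|$. For the boundary sum $\sum_T h_T^{-1}\|Q_0\bu - Q_b\bu\|_{\partial T}^2$, I split via the triangle inequality into $\|Q_0\bu - \bu\|_{\partial T}$ and $\|\bu - Q_b\bu\|_{\partial T}$: the first is handled by the full trace inequality \eqref{tracein} combined with \eqref{error2} at $s=0$ and $s=1$, while the second is controlled by $\|\bu - Q_b\bu\|_{\partial T} \le \|\bu - Q_0\bu\|_{\partial T}$, which follows from the $L^2$-optimality of the face projection $Q_b$. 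Both contributions again total $Ch^{2k}\|\bu\|_{k+1}^2$. Multiplying by $2\mu$ and $\lambda$ respectively, adding, and taking square roots delivers the claimed $\3bar \bu - Q_h\bu\3bar \le Ch^k\|\bu\|_{k+1}$.

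No genuine obstacle arises; the only point that deserves emphasis is that the polynomial trace inequality \eqref{trace} is applied to the discrete weak derivatives $\epsilon_w(\bu - Q_h\bu) \in [P_{r_1}(T)]^{d\times d}$ and $\nabla_w\cdot(\bu - Q_h\bu) \in P_{r_2}(T)$ themselves, and is therefore valid regardless of whether $T$ is convex. This is precisely what allows the $h_T^{-1/2}$ cost of the trace inequality to pair cleanly with the $h_T^{k+1/2}$ rate of the boundary approximation error, yielding the optimal $h^k$-order on the right-hand side.
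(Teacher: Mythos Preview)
Your proposal is correct and follows essentially the same approach as the paper: both arguments apply the integration-by-parts identities \eqref{2.5new} and \eqref{disdivnew} to $\bu-Q_h\bu$, use Cauchy--Schwarz together with the polynomial trace inequality \eqref{trace} on the discrete weak derivatives, and then invoke the approximation estimate \eqref{error2} with $n=k$, $s=0,1$ to control the resulting $Q_0\bu-\bu$ terms. The only cosmetic differences are that the paper keeps a generic test function $\bvarphi$ until the end and works with global sums throughout, whereas you specialize $\bvarphi=\epsilon_w(\bu-Q_h\bu)$ immediately and derive elementwise bounds before summing; your explicit handling of $\|Q_0\bu-Q_b\bu\|_{\partial T}$ via the triangle inequality and the $L^2$-optimality of $Q_b$ is exactly the step the paper leaves implicit.
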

\begin{proof}
Recall that $\mu$ and $\lambda$ are the Lam$\acute{e}$ constants.  Using \eqref{2.5new}, the Cauchy-Schwarz inequality, the trace inequalities \eqref{tracein}-\eqref{trace}, and the estimate \eqref{error2} with $n=k$ and $s=0, 1$,  we obtain:
\begin{equation*}
\begin{split} 
&\quad\sum_{T\in {\cal T}_h}(2\mu\epsilon_w(\bu-Q_h\bu), \bvarphi)_T \\
&=\sum_{T\in {\cal T}_h} (2\mu\epsilon(\bu-Q_0\bu),  \bvarphi)_T+\langle 2\mu(Q_0\bu-Q_b\bu), \frac{1}{2}(\bvarphi+\bvarphi^T)\cdot\bn\rangle_{\partial T}\\
&\leq \Big(\sum_{T\in {\cal T}_h}\|2\mu\epsilon(\bu-Q_0\bu)\|^2_T\Big)^{\frac{1}{2}} \Big(\sum_{T\in {\cal T}_h}\|\bvarphi\|_T^2\Big)^{\frac{1}{2}}\\&\quad+ \Big(\sum_{T\in {\cal T}_h} \|2\mu(Q_0\bu-Q_b\bu)\|_{\partial T} ^2\Big)^{\frac{1}{2}}\Big(\sum_{T\in {\cal T}_h} \|\bvarphi\|_{\partial T}^2\Big)^{\frac{1}{2}}\\
&\leq C\Big(\ \sum_{T\in {\cal T}_h} \|\epsilon(\bu-Q_0\bu)\|_T^2\Big)^{\frac{1}{2}}\Big(\sum_{T\in {\cal T}_h} \|\bvarphi\|_T^2\Big)^{\frac{1}{2}}\\&\quad+C\Big(\sum_{T\in {\cal T}_h}h_T^{-1} \|Q_0\bu-\bu\|_{T} ^2+h_T \|Q_0\bu-\bu\|_{1,T} ^2\Big)^{\frac{1}{2}}\Big(\sum_{T\in {\cal T}_h} h_T^{-1}\|\bvarphi\|_T^2\Big)^{\frac{1}{2}}\\
&\leq Ch^k\|\bu\|_{k+1}\Big(\sum_{T\in {\cal T}_h} \|\bvarphi\|_T^2\Big)^{\frac{1}{2}},
\end{split}
\end{equation*}
 for any $\bvarphi\in [P_{r_1}(T)]^{d\times d}$. 
 
Letting $\bvarphi= \epsilon_w(\bu-Q_h\bu)$ yields 
\begin{equation}\label{n1}
\begin{split}
   &  \sum_{T\in {\cal T}_h}(2\mu\epsilon_w(\bu-Q_h\bu),  \epsilon_w(\bu-Q_h\bu))_T\\\leq & 
 Ch^k\|\bu\|_{k+1}\Big(\sum_{T\in {\cal T}_h} \|\epsilon_w(\bu-Q_h\bu)\|_T^2\Big)^{\frac{1}{2}}. 
\end{split}
\end{equation}

Using \eqref{disdivnew},the Cauchy-Schwarz inequality, the trace inequalities \eqref{tracein}-\eqref{trace}, and the estimate \eqref{error2} with $n=k$ and $s=0, 1$,  we have

  \begin{equation*}
\begin{split}
&\quad\sum_{T\in {\cal T}_h}(\lambda \nabla_w \cdot(\bu-Q_h\bu), \phi)_T \\
&=\sum_{T\in {\cal T}_h} (\lambda \nabla\cdot(\bu-Q_0\bu),  \phi)_T+\langle \lambda(Q_0\bu-Q_b\bu)\cdot\bn,  \phi\rangle_{\partial T}\\
&\leq \Big(\sum_{T\in {\cal T}_h}\|\lambda \nabla\cdot(\bu-Q_0\bu)\|^2_T\Big)^{\frac{1}{2}} \Big(\sum_{T\in {\cal T}_h}\|\phi\|_T^2\Big)^{\frac{1}{2}}\\&\quad+ \Big(\sum_{T\in {\cal T}_h} \|\lambda(Q_0\bu-Q_b\bu)\cdot\bn\|_{\partial T} ^2\Big)^{\frac{1}{2}}\Big(\sum_{T\in {\cal T}_h} \|\phi\|_{\partial T}^2\Big)^{\frac{1}{2}}\\
&\leq C\Big(\ \sum_{T\in {\cal T}_h} \|\nabla\cdot(\bu-Q_0\bu)\|_T^2\Big)^{\frac{1}{2}}\Big(\sum_{T\in {\cal T}_h} \|\phi\|_T^2\Big)^{\frac{1}{2}}\\&\quad+C\Big(\sum_{T\in {\cal T}_h}h_T^{-1} \|Q_0\bu-\bu\|_{T} ^2+h_T \|Q_0\bu-\bu\|_{1,T} ^2\Big)^{\frac{1}{2}}\Big(\sum_{T\in {\cal T}_h}h_T^{-1}\|\phi\|_T^2\Big)^{\frac{1}{2}}\\
&\leq Ch^k\|\bu\|_{k+1}\Big(\sum_{T\in {\cal T}_h} \|\phi\|_T^2\Big)^{\frac{1}{2}},
\end{split}
\end{equation*}
 for any $\phi\in  P_{r_2}(T)$. 
 
Letting $\phi= \nabla_w \cdot(\bu-Q_h\bu)$ gives 
\begin{equation}\label{n2}
\begin{split}
  & \sum_{T\in {\cal T}_h}(\lambda \nabla_w \cdot(\bu-Q_h\bu), \nabla_w \cdot(\bu-Q_h\bu))_T\\ \leq &
 Ch^k\|\bu\|_{k+1}\Big(\sum_{T\in {\cal T}_h} \|\nabla_w \cdot(\bu-Q_h\bu)\|_T^2\Big)^{\frac{1}{2}}. 
\end{split}
\end{equation}

Combining \eqref{n1} and \eqref{n2}, we conclude that $$
 \3bar\bu-Q_h\bu\3bar ^2\leq 
 Ch^k\|\bu\|_{k+1}\Big(\Big(\sum_{T\in {\cal T}_h} \|\epsilon_w(\bu-Q_h\bu)\|_T^2\Big)^{\frac{1}{2}}+\Big(\sum_{T\in {\cal T}_h} \|\nabla_w \cdot(\bu-Q_h\bu)\|_T^2\Big)^{\frac{1}{2}}\Big).
 $$
 This completes the proof of the lemma.
\end{proof}

\begin{theorem}
Assume  the exact solution $\bu$ of the elasticity problem \eqref{model} is sufficiently regular such that $\bu\in [H^{k+1} (\Omega)]^d$. Then, there exists a constant $C$, such that the following error estimate holds: 
\begin{equation}\label{trinorm}
\3bar \bu-\bu_h\3bar \leq Ch^k\|\bu\|_{k+1}.
\end{equation}
\end{theorem}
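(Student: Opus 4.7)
The plan is to establish the energy-norm error estimate through a standard projection-based decomposition of the error. First, I would write
\[
\3bar \bu-\bu_h\3bar \leq \3bar \bu-Q_h\bu\3bar + \3bar Q_h\bu-\bu_h\3bar,
\]
so that the first term is already controlled by the projection estimate \eqref{erroresti1}. Setting $\beta_h := Q_h\bu-\bu_h\in V_h^0$, the remaining task is to bound $\3bar \beta_h\3bar$ using the error equation of Lemma \ref{errorequa}.

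For that, I would test the error equation with $\bv=\beta_h$ and split $\be_h=\bu-\bu_h = (\bu-Q_h\bu)+\beta_h$. Denoting by $a(\cdot,\cdot)$ the bilinear form on the left side of \eqref{erroreqn}, this gives
\[
\3bar \beta_h\3bar^{2} = a(\beta_h,\beta_h) = \ell(\bu,\beta_h) - a(\bu-Q_h\bu,\beta_h).
\]
The term $a(\bu-Q_h\bu,\beta_h)$ is immediately controlled by Cauchy--Schwarz in the energy inner product and \eqref{erroresti1}, contributing at most $C h^k\|\bu\|_{k+1}\3bar\beta_h\3bar$.

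The main obstacle is bounding $|\ell(\bu,\beta_h)|$. For each face term of $\ell$, I would apply Cauchy--Schwarz on $\partial T$ and insert the scaling $h_T^{\pm 1/2}$ to obtain
\[
|\ell(\bu,\bv)| \leq C\Big(\sum_{T}h_T^{-1}\|\bv_b-\bv_0\|_{\partial T}^{2}\Big)^{\!1/2}\!\Big(\sum_{T}h_T\bigl(\|(Q_{r_1}-I)\epsilon(\bu)\|_{\partial T}^{2}+\|(Q_{r_2}-I)\nabla\!\cdot\!\bu\|_{\partial T}^{2}\bigr)\Big)^{\!1/2}.
\]
The first factor is bounded by $\|\bv\|_{1,h}$ and hence, via the norm equivalence \eqref{normeq}, by $C\3bar\bv\3bar$. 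For the second factor I would use the trace inequality \eqref{tracein} applied to $(Q_{r_1}-I)\epsilon(\bu)$ and $(Q_{r_2}-I)\nabla\cdot\bu$, then invoke the approximation estimates \eqref{error1} and \eqref{error3} with $s=0,1$ and $m=k+1$; the two scales $h_T\cdot h_T^{-1}\|\cdot\|_T^{2}$ and $h_T\cdot h_T\|\cdot\|_{1,T}^{2}$ both yield a bound of order $h^{2k}\|\bu\|_{k+1}^{2}$. Combined, this delivers $|\ell(\bu,\beta_h)|\leq Ch^{k}\|\bu\|_{k+1}\3bar\beta_h\3bar$.

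Plugging both bounds back in gives $\3bar\beta_h\3bar^{2}\leq Ch^{k}\|\bu\|_{k+1}\3bar\beta_h\3bar$, hence $\3bar\beta_h\3bar\leq Ch^{k}\|\bu\|_{k+1}$. A final application of the triangle inequality with \eqref{erroresti1} concludes the proof. The conceptual step requiring most care is verifying that the boundary quantities $\|(Q_{r_1}-I)\epsilon(\bu)\|_{\partial T}$ and $\|(Q_{r_2}-I)\nabla\cdot\bu\|_{\partial T}$ indeed scale like $h^{k-1/2}$ after summation; everything else is routine assembly of Cauchy--Schwarz, the trace inequality, and the projection estimates already recorded.
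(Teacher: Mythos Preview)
Your proposal is correct and follows essentially the same approach as the paper: both arguments use the error equation tested with $Q_h\bu-\bu_h$, bound $\ell(\bu,\cdot)$ via Cauchy--Schwarz, the trace inequality \eqref{tracein}, the approximation estimates \eqref{error1}--\eqref{error3}, and the norm equivalence \eqref{normeq}, and combine this with the projection estimate \eqref{erroresti1}. The only cosmetic difference is that the paper works directly with $\3bar\bu-\bu_h\3bar^2$ and splits inside the bilinear form, whereas you apply the triangle inequality first and bound $\3bar Q_h\bu-\bu_h\3bar$; the ingredients and the order of estimates are otherwise identical.
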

\begin{proof}
For   the right-hand side of the error equation \eqref{erroreqn}, using the Cauchy-Schwarz inequality, the trace inequality \eqref{tracein},  the estimates \eqref{error1}-\eqref{error3} with $m=k+1$ and $s=0,1$, and \eqref{normeq}, we have   
\begin{equation}\label{erroreqn1}
\begin{split}
& \Big|\sum_{T\in {\cal T}_h}\langle   \bv_b-\bv_0, 2\mu( Q_{r_1}-I)  \epsilon (\bu) \cdot\bn\rangle_{\partial T} \\&+\langle (\bv_b-\bv_0)\cdot\bn, \lambda   (Q_{r_2} -I)(\nabla  \cdot \bu)\rangle_{\partial T}\Big|\\
\leq & C\Big(\sum_{T\in {\cal T}_h}\|2\mu( Q_{r_1}-I)  \epsilon (\bu) \cdot\bn\|^2_T+h_T^2\|2\mu( Q_{r_1}-I)  \epsilon (\bu) \cdot\bn\|^2_{1,T}\Big)^{\frac{1}{2}}  \\& \cdot\Big(\sum_{T\in {\cal T}_h}h_T^{-1}\|\bv_b-\bv_0\|^2_{\partial T}\Big)^{\frac{1}{2}}\\
&+C\Big(\sum_{T\in {\cal T}_h}\|\lambda(Q_{r_2} -I)(\nabla  \cdot \bu)\|^2_T+h_T^2\|\lambda(Q_{r_2} -I)(\nabla  \cdot \bu)\|^2_{1,T}\Big)^{\frac{1}{2}}  \\&\cdot\Big(\sum_{T\in {\cal T}_h}h_T^{-1}\|\bv_b-\bv_0\|^2_{\partial T}\Big)^{\frac{1}{2}}\\
\leq & Ch^k\|\bu\|_{k+1} \| \bv\|_{1,h}\\
\leq & Ch^k\|\bu\|_{k+1} \3bar \bv\3bar.
\end{split}
\end{equation}
Substituting \eqref{erroreqn1}  into \eqref{erroreqn}  gives
\begin{equation}\label{err}
\sum_{T\in {\cal T}_h}(2\mu\epsilon_w (\be_h), \epsilon_w (\bv))_T+(\lambda \nabla_w \cdot \be_h, \nabla_w \cdot \bv)_T\leq   Ch^k\|\bu\|_{k+1} \3bar  \bv\3bar.
\end{equation} 

Now, applying the  Cauchy-Schwarz inequality and letting $\bv=Q_h\bu-\bu_h$ in \eqref{err}, along with  the estimate \eqref{erroresti1}, we obtain
\begin{equation*}
\begin{split}
& \3bar \bu-\bu_h\3bar^2\\=&\sum_{T\in {\cal T}_h}(2\mu\epsilon_w (\bu-\bu_h), \epsilon_w (\bu-Q_h\bu))_T+(2\mu\epsilon_w (\bu-\bu_h), \epsilon_w (Q_h\bu-\bu_h))_T\\
&+(\lambda\nabla_w \cdot(\bu-\bu_h), \nabla_w \cdot (\bu-Q_h\bu))_T+(\lambda\nabla_w \cdot (\bu-\bu_h), \nabla_w \cdot (Q_h\bu-\bu_h))_T\\
\leq &\Big(\sum_{T\in {\cal T}_h}2\mu\|\epsilon_w (\bu-\bu_h)\|^2_T\Big)^{\frac{1}{2}} \Big(\sum_{T\in {\cal T}_h}2\mu\| \epsilon_w(\bu-Q_h\bu)\|^2_T\Big)^{\frac{1}{2}} \\
&+ \Big(\sum_{T\in {\cal T}_h}\lambda\|\nabla_w \cdot(\bu-\bu_h)\|^2_T\Big)^{\frac{1}{2}} \Big(\sum_{T\in {\cal T}_h}\lambda\| \nabla_w \cdot(\bu-Q_h\bu)\|^2_T\Big)^{\frac{1}{2}} \\& +\sum_{T\in {\cal T}_h} (2\mu\epsilon_w (\bu-\bu_h), \epsilon_w (Q_h\bu-\bu_h))_T+(\lambda\nabla_w \cdot (\bu-\bu_h), \nabla_w \cdot (Q_h\bu-\bu_h))_T\\
\leq &\3bar \bu-\bu_h \3bar  \3bar \bu-Q_h\bu \3bar+ Ch^k\|\bu\|_{k+1} \3bar Q_h\bu-\bu_h\3bar\\
\leq &\3bar \bu-\bu_h  \3bar  h^k\|\bu\|_{k+1} + Ch^k\|\bu\|_{k+1}  (\3bar Q_h\bu-\bu\3bar+\3bar \bu-\bu_h \3bar)  \\
\leq &\3bar \bu-\bu_h  \3bar  h^k\|\bu\|_{k+1} + Ch^k\|\bu\|_{k+1}   h^k\|\bu\|_{k+1}+Ch^k\|\bu\|_{k+1} \3bar \bu-\bu_h\3bar.
\end{split}
\end{equation*}
This simplifies to 
\begin{equation*}
\begin{split}
 \3bar \bu-\bu_h\3bar  \leq Ch^k\|\bu\|_{k+1}.
\end{split}
\end{equation*} 

Thus, the proof of the theorem is complete.
\end{proof}

\section{Error Estimates in $L^2$ Norm} 
The standard duality argument is applied to derive the $L^2$ error estimate. Recall that the error function  $\be_h=\bu-\bu_h=\{\be_0, \be_b\}$  represents the difference between the exact solution $\bu$ and the computed solution $\bu_h$.  Additionally, we define $  \bzeta_h =Q_h\bu - \bu_h=\{  \bzeta_0,   \bzeta_b\}$. 

The dual problem for the elasticity problem \eqref{model}  seeks $\bw$  such that 
\begin{equation}\label{dual}
\begin{split}
    -\nabla \cdot (2\mu \epsilon(\bw)+\lambda(\nabla\cdot \bw)\bI)=&\bzeta_0,\qquad \text{in} \quad \Omega,\\
    \bw=&0, \qquad \text{on} \quad \partial\Omega.
\end{split}
\end{equation}
Assume that  the dual problem \eqref{dual}  satisfies the following regularity estimate:
 \begin{equation}\label{regu2}
  \|\bw\|_{1+\alpha} \leq C\|  \bzeta_0\|,
 \end{equation}
 where $\frac{1}{2}<\alpha\leq 1$.
 \begin{theorem}
Let  $\bu$  be the exact solution of the elasticity problem \eqref{model}, assuming sufficient regularity such that $\bu\in [H^{k+1} (\Omega)]^d$.  Let $\bu_h\in V_h$ be the numerical solution of the WG Algorithm \ref{PDWG1}.  If the regularity assumption  \eqref{regu2} holds for the dual problem \eqref{dual}, then there exists a constant $C$ such that 
\begin{equation*}
\|\be_0\|\leq Ch^{k+\alpha}\|\bu\|_{k+1}.
\end{equation*}
 \end{theorem}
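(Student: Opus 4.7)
The plan is a Nitsche-type duality argument adapted to the weak Galerkin framework. Introduce $\bzeta_h = Q_h\bu - \bu_h = \{\bzeta_0, \bzeta_b\} \in V_h^0$; since $\|\be_0\| \leq \|\bzeta_0\| + \|\bu - Q_0\bu\|$ and the second term is $O(h^{k+1})$ by \eqref{error2}, it suffices to prove $\|\bzeta_0\| \leq Ch^{k+\alpha}\|\bu\|_{k+1}$. The starting point is to test the dual problem \eqref{dual} with $\bzeta_0$, giving
$$
\|\bzeta_0\|^2 = -(\nabla\cdot(2\mu\epsilon(\bw) + \lambda(\nabla\cdot\bw)\bI), \bzeta_0).
$$
Performing element-wise integration by parts and summing over ${\cal T}_h$ produces an interior contribution $\sum_{T\in {\cal T}_h}(2\mu\epsilon(\bw),\epsilon(\bzeta_0))_T + (\lambda\nabla\cdot\bw,\nabla\cdot\bzeta_0)_T$ together with boundary terms. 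Since $\bw$ is single-valued across interior faces and vanishes on $\partial\Omega$, and $\bzeta_b$ is single-valued with $\bzeta_b|_{\partial\Omega}=0$, the boundary contributions reassemble into sums involving $\bzeta_0-\bzeta_b$ against $2\mu\epsilon(\bw)\cdot\bn$ and $\lambda(\nabla\cdot\bw)\bn$ on each $\partial T$.

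Next, using Lemmas \ref{pror1}--\ref{pror2} and the identities \eqref{2.5new}, \eqref{disdivnew}, I would rewrite the interior products in terms of the discrete weak derivatives acting on $\bzeta_h$ and $Q_h\bw$. This turns the interior contribution into the WG bilinear form
$$
\sum_{T\in {\cal T}_h} 2(\mu\epsilon_w(\bzeta_h),\epsilon_w(Q_h\bw))_T + (\lambda\nabla_w\cdot\bzeta_h,\nabla_w\cdot Q_h\bw)_T,
$$
at the price of additional boundary residuals of the form $\langle \bzeta_b - \bzeta_0, 2\mu(I-Q_{r_1})\epsilon(\bw)\cdot\bn\rangle_{\partial T}$ and $\langle (\bzeta_b - \bzeta_0)\cdot\bn, \lambda(I-Q_{r_2})(\nabla\cdot\bw)\rangle_{\partial T}$. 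Since $\bw|_{\partial\Omega}=0$ ensures $Q_h\bw\in V_h^0$, I would then invoke the error equation \eqref{erroreqn} with $\bv = Q_h\bw$; writing $\bzeta_h = \be_h - (\bu-Q_h\bu)$, the WG bilinear form of $\bzeta_h$ against $Q_h\bw$ is equal to $\ell(\bu, Q_h\bw)$ minus the WG bilinear form of $\bu - Q_h\bu$ against $Q_h\bw$, both of which are tractable.

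The final step is to bound each residual by Cauchy--Schwarz, the trace inequality \eqref{tracein}, the approximation estimates \eqref{error1}--\eqref{error3}, and Lemma \ref{normeqva}. The $\bu$-factor, at full smoothness $\bu\in [H^{k+1}(\Omega)]^d$, contributes $h^k\|\bu\|_{k+1}$, while the $\bw$-factor, only in $[H^{1+\alpha}(\Omega)]^d$, contributes $h^\alpha\|\bw\|_{1+\alpha}$ through the fractional counterparts of \eqref{error1}--\eqref{error3} at $m=1+\alpha$ with $s=0,1$. The error estimate \eqref{erroresti1} together with \eqref{trinorm} handles the $\ell(\bu, Q_h\bw)$ and the $(\bu-Q_h\bu)$-term. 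Collecting all estimates yields $\|\bzeta_0\|^2 \leq Ch^{k+\alpha}\|\bu\|_{k+1}\|\bw\|_{1+\alpha}$, and invoking the regularity \eqref{regu2} so that $\|\bw\|_{1+\alpha}\leq C\|\bzeta_0\|$ and dividing through gives the claim. The main obstacle I anticipate is the careful bookkeeping of boundary terms from the two successive integrations by parts (first on the dual equation, then through the weak-derivative identities) so that every leftover residual takes the canonical form $\langle \bv_b - \bv_0, (I-Q_{r_1})\epsilon(\cdot)\cdot\bn\rangle_{\partial T}$ or its divergence analogue, which is precisely the form that admits the $h^\alpha$ trace-plus-approximation estimate under the limited $H^{1+\alpha}$ regularity of $\bw$.
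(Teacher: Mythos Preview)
Your overall plan is correct and follows the paper's duality argument: test \eqref{dual} with $\bzeta_0$, integrate by parts, convert to the discrete weak bilinear form via Lemmas~\ref{pror1}--\ref{pror2} and \eqref{2.5new}, \eqref{disdivnew}, invoke the error equation with $\bv = Q_h\bw$, and bound the residuals. One minor imprecision: the conversion of the interior products naturally yields the bilinear form in $\epsilon_w(\bw)$ (since $\epsilon_w(\bw)=Q_{r_1}\epsilon(\bw)$), not in $\epsilon_w(Q_h\bw)$. The paper then splits $\bw = Q_h\bw + (\bw - Q_h\bw)$ before applying \eqref{erroreqn}, which produces an additional cross-term $\sum_T 2(\mu\epsilon_w(\bw-Q_h\bw),\epsilon_w(\be_h))_T+(\lambda\nabla_w\cdot(\bw-Q_h\bw),\nabla_w\cdot\be_h)_T$ that you do not list but which is easily controlled by $\3bar\bw - Q_h\bw\3bar\,\3bar\be_h\3bar \leq Ch^{k+\alpha}\|\bu\|_{k+1}\|\bw\|_{1+\alpha}$ via \eqref{erroresti1} (with $k$ replaced by $\alpha$) and \eqref{trinorm}. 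Also, $\ell(\bu,Q_h\bw)$ is bounded by direct trace and approximation estimates on $\|Q_b\bw - Q_0\bw\|_{\partial T}$ (from \eqref{error2} at $n=\alpha$), not via \eqref{erroresti1} or \eqref{trinorm}; the norm equivalence in Lemma~\ref{normeqva} is only stated for discrete $\bv\in V_h$ and does not apply to $\bw - Q_h\bw$.

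There is, however, a genuine gap in your handling of the $(\bu - Q_h\bu)$-term. Cauchy--Schwarz together with \eqref{erroresti1} gives $\3bar\bu - Q_h\bu\3bar \leq Ch^k\|\bu\|_{k+1}$, but the remaining factor---whether $\3bar Q_h\bw\3bar$ or $\3bar\bw\3bar$---is $O(1)$, so this route yields only $h^k$, not $h^{k+\alpha}$. The paper extracts the missing $h^\alpha$ by an orthogonality trick: with $Q^0$ the $L^2$-projection onto elementwise constants, the definition \eqref{2.5} gives
\[
\big(Q^0\epsilon_w(\bw),\,\epsilon_w(Q_h\bu - \bu)\big)_T
= -\big(Q_0\bu - \bu,\,\nabla\cdot Q^0\epsilon_w(\bw)\big)_T
+ \big\langle Q_b\bu - \bu,\,Q^0\epsilon_w(\bw)\cdot\bn\big\rangle_{\partial T} = 0,
\]
since the divergence of a constant tensor vanishes and $Q_b$ preserves face integrals against constants. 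One may therefore replace $\epsilon_w(\bw) = Q_{r_1}\epsilon(\bw)$ by $Q_{r_1}\epsilon(\bw) - Q^0(Q_{r_1}\epsilon(\bw))$, whose $L^2$-norm is $O(h^\alpha\|\bw\|_{1+\alpha})$, and pair it with $\3bar Q_h\bu - \bu\3bar$. Without this observation (or an equivalent one), the duality argument stalls at order $h^k$ on this term.
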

 
 \begin{proof}
 Testing \eqref{dual} with $\bzeta_0$ gives
 \begin{equation}\label{e1}
 \begin{split}
 \|\bzeta_0\|^2 =&( -\nabla \cdot (2\mu \epsilon(\bw)+\lambda(\nabla\cdot \bw)\bI),   \bzeta_0) 
  \\=& \sum_{T\in {\cal T}_h} (2\mu \epsilon(\bw)+\lambda (\nabla\cdot \bw)\bI, \nabla \bzeta_0)_T-\langle  2\mu \epsilon(\bw)\cdot\bn,   \bzeta_0  \rangle_{\partial T}\\&-\langle   \lambda  \nabla\cdot \bw,   \bzeta_0 \cdot\bn \rangle_{\partial T}\\
   =&  \sum_{T\in {\cal T}_h} (2\mu \epsilon(\bw),\epsilon( \bzeta_0))_T+( \lambda \nabla\cdot \bw, \nabla\cdot \bzeta_0)_T\\&-\langle  2\mu \epsilon(\bw)\cdot\bn,   \bzeta_0   -\bzeta_b\rangle_{\partial T}-\langle   \lambda  \nabla\cdot \bw,   (\bzeta_0-\bzeta_b) \cdot\bn \rangle_{\partial T}, 
 \end{split}
 \end{equation}
where  integration by parts has been used along with the fact that 
$\sum_{T\in {\cal T}_h} \langle    2\mu \epsilon(\bw)\cdot\bn,   \bzeta_b\rangle_{\partial T}=\langle 2\mu \epsilon(\bw)\cdot\bn,   \bzeta_b \rangle_{\partial \Omega}=0$ and $\sum_{T\in {\cal T}_h} \langle   \lambda  \nabla\cdot \bw,  \bzeta_b  \cdot\bn\rangle_{\partial T}=\langle   \lambda  \nabla\cdot \bw,  \bzeta_b  \cdot\bn \rangle_{\partial \Omega}=0$ since $  \bzeta_b=Q_b\bu-\bu_b=0$ on $\partial\Omega$.

 Letting  $\bu=\bw$ and $\bv=  \bzeta_h$ in \eqref{54}, we get:
\begin{equation*} 
\begin{split}
&\sum_{T\in {\cal T}_h} (2\mu\epsilon_w (\bw), \epsilon_w (\bzeta_h))_T+(\lambda \nabla_w \cdot \bw, \nabla_w \cdot \bzeta_h))_T\\
= &\sum_{T\in {\cal T}_h} (2\mu\epsilon(\bzeta_0),     \epsilon (\bw))_T+ \langle  2\mu (\bzeta_b-\bzeta_0),  Q_{r_1}  \epsilon (\bw) \cdot\bn\rangle_{\partial T}\\&+  (\lambda\nabla\cdot \bzeta_0,     \nabla  \cdot \bw)_T+ \langle \lambda(\bzeta_b-\bzeta_0)\cdot\bn,    Q_{r_2} (\nabla  \cdot \bw)\rangle_{\partial T}.
\end{split}
\end{equation*}  
This can be rewritten as: 
\begin{equation*} 
\begin{split}
&\sum_{T\in {\cal T}_h} (2\mu\epsilon(\bzeta_0),     \epsilon (\bw))_T+\lambda(\nabla\cdot \bzeta_0,     \nabla  \cdot \bw)_T\\=&\sum_{T\in {\cal T}_h} (2\mu\epsilon_w (\bw), \epsilon_w (\bzeta_h))_T+(\lambda \nabla_w \cdot \bw, \nabla_w \cdot \bzeta_h)_T\\&- \langle  2\mu (\bzeta_b-\bzeta_0),  Q_{r_1}  \epsilon (\bw) \cdot\bn\rangle_{\partial T}- \langle \lambda(\bzeta_b-\bzeta_0)\cdot\bn,    Q_{r_2} (\nabla  \cdot \bw)\rangle_{\partial T}.
\end{split}
\end{equation*} 
Substituting this equation into \eqref{e1} and using \eqref{erroreqn}, we obtain:
\begin{equation}\label{e2}
 \begin{split}
& \|  \bzeta_0\|^2  \\
  = &\sum_{T\in {\cal T}_h}(2\mu\epsilon_w (\bw), \epsilon_w (\bzeta_h))_T+(\lambda \nabla_w \cdot \bw, \nabla_w \cdot \bzeta_h)_T\\&- \langle  2\mu (\bzeta_b-\bzeta_0),  (Q_{r_1}-I)  \epsilon (\bw) \cdot\bn\rangle_{\partial T}\\&-\lambda\langle (\bzeta_b-\bzeta_0)\cdot\bn,   ( Q_{r_2}-I) (\nabla  \cdot \bw)\rangle_{\partial T}\\
  =& \sum_{T\in {\cal T}_h}(2\mu\epsilon_w (\bw), \epsilon_w (\be_h))_T+(\lambda \nabla_w \cdot \bw, \nabla_w \cdot \be_h)_T\\&+(2\mu\epsilon_w (\bw), \epsilon_w (Q_h\bu-\bu))_T+(\lambda \nabla_w \cdot \bw, \nabla_w \cdot (Q_h\bu-\bu))_T\\&-\ell(\bw,   \bzeta_h)\\ 
  =& \sum_{T\in {\cal T}_h}(2\mu\epsilon_w (Q_h\bw), \epsilon_w (\be_h))_T+(2\mu\epsilon_w (\bw-Q_h\bw), \epsilon_w (\be_h))_T\\&+(\lambda \nabla_w \cdot Q_h\bw, \nabla_w \cdot \be_h)_T+(\lambda \nabla_w \cdot (\bw-Q_h\bw), \nabla_w \cdot \be_h)_T\\&+(2\mu\epsilon_w (\bw), \epsilon_w (Q_h\bu-\bu))_T+(\lambda \nabla_w \cdot \bw, \nabla_w \cdot (Q_h\bu-\bu))_T\\&-\ell(\bw,   \bzeta_h)\\ 
  =&\ell(\bu, Q_h\bw) + \sum_{T\in {\cal T}_h} (2\mu\epsilon_w (\bw-Q_h\bw), \epsilon_w (\be_h))_T\\&+(\lambda \nabla_w \cdot (\bw-Q_h\bw), \nabla_w \cdot \be_h)_T +(2\mu\epsilon_w (\bw), \epsilon_w (Q_h\bu-\bu))_T\\&+(\lambda \nabla_w \cdot \bw, \nabla_w \cdot (Q_h\bu-\bu))_T-\ell(\bw,   \bzeta_h)\\
  =&\sum_{i=1}^6 J_i.
 \end{split}
 \end{equation}
 
We will estimate the six terms $J_i$ for $i=1,\cdots, 6$ on the last line of \eqref{e2} individually.

Regarding to $J_1$, using the Cauchy-Schwarz inequality, the trace inequality \eqref{tracein},   the estimate \eqref{error1} with $m=k+1$ and $s=0, 1$, and the estimate \eqref{error2} with $n=\alpha$, we have:
\begin{equation*}\label{ee1}
\begin{split}
J_1=&\ell(\bu, Q_h\bw)\\
\leq &|\sum_{T\in {\cal T}_h}  \langle   Q_b\bw-Q_0\bw, 2\mu( Q_{r_1}-I)  \epsilon (\bu) \cdot\bn\rangle_{\partial T}| \\&+|\sum_{T\in {\cal T}_h} \langle (Q_b\bw-Q_0\bw)\cdot\bn, \lambda   (Q_{r_2} -I)(\nabla  \cdot \bu)\rangle_{\partial T}|\\
\leq& \Big(\sum_{T\in {\cal T}_h}\|Q_b\bw-Q_0\bw\|_{\partial T}^2\Big)^{\frac{1}{2}} \Big(\sum_{T\in {\cal T}_h}\|2\mu( Q_{r_1}-I)  \epsilon (\bu) \cdot\bn\|_{\partial T}^2\Big)^{\frac{1}{2}} \\
&+\Big(\sum_{T\in {\cal T}_h}\| (Q_b\bw-Q_0\bw)\cdot\bn\|_{\partial T}^2\Big)^{\frac{1}{2}} \Big(\sum_{T\in {\cal T}_h}\|\lambda(Q_{r_2} -I)(\nabla  \cdot \bu)\|_{\partial T}^2\Big)^{\frac{1}{2}} \\
\leq& \Big(\sum_{T\in {\cal T}_h}h_T^{-1}\| \bw-Q_0\bw\|_{T}^2+h_T \| \bw-Q_0\bw\|_{1, T}^2\Big)^{\frac{1}{2}} \\&\cdot\Big(\sum_{T\in {\cal T}_h}h_T^{-1}\|2\mu( Q_{r_1}-I)  \epsilon (\bu) \cdot\bn\|_{T}^2+h_T \|2\mu( Q_{r_1}-I)  \epsilon (\bu) \cdot\bn\|_{1,T}^2\Big)^{\frac{1}{2}} \\
&+\Big(\sum_{T\in {\cal T}_h}h_T^{-1}\|(\bw-Q_0\bw)\cdot\bn\|_{T}^2+h_T \|(\bw-Q_0\bw)\cdot\bn\|_{1, T}^2\Big)^{\frac{1}{2}} \\&\cdot\Big(\sum_{T\in {\cal T}_h}h_T^{-1}\|\lambda(Q_{r_2} -I)(\nabla  \cdot \bu)\|_{ T}^2+h_T \|\lambda(Q_{r_2} -I)(\nabla  \cdot \bu)\|_{1,  T}^2\Big)^{\frac{1}{2}} 
\\
\leq &Ch^{-1}h^k\|\bu\|_{k+1}h^{1+\alpha}\|\bw\|_{1+\alpha}\\
\leq & Ch^{k+\alpha}\|\bu\|_{k+1}\|\bw\|_{1+\alpha}.
\end{split}
\end{equation*}

For $J_2$, using the Cauchy-Schwarz inequality, \eqref{erroresti1} with $k=\alpha$ and \eqref{trinorm}, we obtain:
\begin{equation*}\label{ee2}
\begin{split}
J_2 \leq  \3bar \bw-Q_h\bw\3bar \3bar \be_h\3bar\leq Ch^{k}\|\bu\|_{k+1}h^{\alpha}\|\bw\|_{1+\alpha}\leq Ch^{k+\alpha}\|\bu\|_{k+1}\|\bw\|_{1+\alpha}.
\end{split}
\end{equation*}

For $J_3$, using the Cauchy-Schwarz inequality, \eqref{erroresti1} with $k=\alpha$,  and \eqref{trinorm}, we find:
\begin{equation*}\label{ee2}
\begin{split}
J_3 \leq C\3bar \bw-Q_h\bw\3bar \3bar \be_h\3bar\leq Ch^{k}\|\bu\|_{k+1}h^{\alpha}\|\bw\|_{1+\alpha}\leq Ch^{k+\alpha}\|\bu\|_{k+1}\|\bw\|_{1+\alpha}.
\end{split}
\end{equation*}

For $J_4$, let $Q^0$ denote the $L^2$-projection onto $[P_0(T)]$. Using \eqref{2.5}, we get:
  \begin{equation}\label{ee}
 \begin{split}
 &(2\mu Q^0(\epsilon_w (\bw)), \epsilon_w (Q_h\bu-\bu))_T\\ =& - (2\mu(Q_0\bu-\bu), \nabla \cdot(Q^0(\epsilon_w (\bw)))_T \\&+ \langle 2\mu(Q_b\bu-\bu), Q^0(\epsilon_w (\bw))\cdot\bn\rangle_{\partial T}=0.
 \end{split}
 \end{equation}
  Using \eqref{ee}, Cauchy-Schwarz inequality, \eqref{pro} and \eqref{erroresti1}, gives 
  \begin{equation*}
  \begin{split}
  J_4\leq &|\sum_{T\in {\cal T}_h}(2\mu\epsilon_w (\bw), \epsilon_w (Q_h\bu-\bu))_T|
  \\
  =&|\sum_{T\in {\cal T}_h} ( 2\mu(\epsilon_w (\bw)-  Q^0(\epsilon_w (\bw))), \epsilon_w (Q_h\bu-\bu))_T|\\
  =&|\sum_{T\in {\cal T}_h} ( 2\mu(Q_{r_1}\epsilon (\bw)-  Q^0(Q_{r_1}\epsilon (\bw))), \epsilon_w (Q_h\bu-\bu))_T|\\
  \leq & \Big(\sum_{T\in {\cal T}_h}\|Q_{r_1}\epsilon (\bw)-  Q^0(Q_{r_1}\epsilon (\bw))\|_T^2\Big)^{\frac{1}{2}} \3bar Q_h\bu-\bu \3bar\\
  \leq & Ch^k\|\bu\|_{k+1}h^{\alpha}\|\bw\|_{1+\alpha}\\
  \leq & Ch^{k+\alpha}\|\bu\|_{k+1}\|\bw\|_{1+\alpha}.
  \end{split}
  \end{equation*}

Similar to the estimate for $J_4$, we obtain:
$$
J_5\leq Ch^{k+\alpha}\|\bu\|_{k+1}\|\bw\|_{1+\alpha}.
$$

For $J_6$, using the Cauchy-Schwarz inequality, the trace inequality \eqref{tracein},  Lemma \ref{normeqva}, and  the estimates \eqref{error1}-\eqref{error3} with $m= 1+\alpha$ and $s=0, 1$, along with   \eqref{erroresti1} and \eqref{trinorm}, we obtain:

\begin{equation*}\label{ee4}
\begin{split}
J_6=&\ell(\bw,   \bzeta_h)
\\
\leq &\Big|\sum_{T\in {\cal T}_h}\langle    \bzeta_b- \bzeta_0, 2\mu( Q_{r_1}-I)  \epsilon (\bw) \cdot\bn\rangle_{\partial T}\Big|\\&+\Big|\sum_{T\in {\cal T}_h}\langle ( \bzeta_b- \bzeta_0)\cdot\bn, \lambda   (Q_{r_2} -I)(\nabla  \cdot \bw)\rangle_{\partial T}\Big| \\
\leq& \Big(\sum_{T\in {\cal T}_h}\| \bzeta_b- \bzeta_0\|_{\partial T}^2\Big)^{\frac{1}{2}} \Big(\sum_{T\in {\cal T}_h}\|2\mu( Q_{r_1}-I)  \epsilon (\bw) \cdot\bn\|_{\partial T}^2\Big)^{\frac{1}{2}}   \\
&+\Big(\sum_{T\in {\cal T}_h}\|( \bzeta_b- \bzeta_0)\cdot\bn\|_{\partial T}^2\Big)^{\frac{1}{2}} \Big(\sum_{T\in {\cal T}_h}\|\lambda   (Q_{r_2} -I)(\nabla  \cdot \bw)\|_{\partial T}^2\Big)^{\frac{1}{2}}   \\
\leq& \Big(\sum_{T\in {\cal T}_h}h_T^{-1}\| \bzeta_b- \bzeta_0\|_{\partial T}^2\Big)^{\frac{1}{2}}\\&\cdot \Big(\sum_{T\in {\cal T}_h}\|2\mu( Q_{r_1}-I)  \epsilon (\bw) \cdot\bn\|_{ T}^2+h_T^2\|2\mu( Q_{r_1}-I)  \epsilon (\bw) \cdot\bn\|_{1, T}^2\Big)^{\frac{1}{2}}   \\
&+\Big(\sum_{T\in {\cal T}_h}h_T^{-1}\|( \bzeta_b- \bzeta_0)\cdot\bn\|_{\partial T}^2\Big)^{\frac{1}{2}} \\&\cdot\Big(\sum_{T\in {\cal T}_h}\|\lambda   (Q_{r_2} -I)(\nabla  \cdot \bw)\|_{T}^2+h_T^2\|\lambda   (Q_{r_2} -I)(\nabla  \cdot \bw)\|_{1, T}^2\Big)^{\frac{1}{2}}   \\
\leq& Ch^{\alpha}\|\bw\|_{1+\alpha} \|   \bzeta_h\|_{1,h}  \\
\leq& Ch^{\alpha}\|\bw\|_{1+\alpha} \3bar   \bzeta_h\3bar  \\
\leq &Ch^{\alpha}\|\bw\|_{1+\alpha} (\3bar \bu-\bu_h\3bar+\3bar \bu-Q_h\bu\3bar)  \\
\leq & Ch^{\alpha}\|\bw\|_{1+\alpha} (h^k\|\bu\|_{k+1}+h^{k}\|\bu\|_{k+1})\\
\leq &Ch^{k+\alpha}\|\bw\|_{1+\alpha}\|\bu\|_{k+1}.
\end{split}
\end{equation*}

  Substituting the estimates for $J_i$ for $i=1,\cdots, 6$ into \eqref{e2} and using \eqref{regu2} gives
$$
\|  \bzeta_0\|^2\leq Ch^{k+\alpha}\|\bw\|_{1+\alpha}\|\bu\|_{k+1}\leq Ch^{k+\alpha}  \|\bu\|_{k+1} \|  \bzeta_0\|.
$$
This gives
$$
\|  \bzeta_0\|\leq Ch^{k+\alpha} \|\bu\|_{k+1},
$$
which, using triangle inequality and \eqref{error2} with $n=k$, gives
$$
\|\be_0\|\leq \|  \bzeta_0\|+\|\bu-Q_0\bu\|\leq Ch^{k+\alpha}\|\bu\|_{k+1}. 
$$

This completes the proof of the theorem. 
\end{proof}

\section{Numerical verification}

In the first test,  we solve the linear elasticity equation \eqref{WG} with $\mu=1$ on the unit square domain $\Omega
  =(0,1)\times(0,1)$.
We choose $\b f$ and $\b g$ in  \eqref{WG} such that the exact solution is
\an{\label{e1} \b u=\p{ (x^2-2x^3+x^4)(2y-6y^2+4y^3) \\ - (y^2-2y^3+y^4)(2x-6x^2+4x^3)}.  }

     \begin{figure}[H] \setlength\unitlength{1pt}\begin{center}
    \begin{picture}(300,100)(0,0)
     \def\mc{\begin{picture}(90,90)(0,0)
       \put(0,0){\line(1,0){90}} \put(0,90){\line(1,0){90}}
      \put(0,0){\line(0,1){90}}  \put(90,0){\line(0,1){90}}  \put(0,0){\line(1,1){90}}  
      \end{picture}}

    \put(0,0){\mc} \put(5,92){Grid 1:} \put(110,92){Grid 2:}\put(215,92){Grid 3:}
      \put(105,0){\setlength\unitlength{0.5pt}\begin{picture}(90,90)(0,0)
    \put(0,0){\mc}\put(90,0){\mc}\put(0,90){\mc}\put(90,90){\mc}\end{picture}}
      \put(210,0){\setlength\unitlength{0.25pt}\begin{picture}(90,90)(0,0)
    \multiput(0,0)(90,0){4}{\multiput(0,0)(0,90){4}{\mc}} \end{picture}}
    \end{picture}
 \caption{The first three grids for the computation in
    Tables \ref{t-1}--\ref{t-2}. }\label{grid1} 
    \end{center} \end{figure}
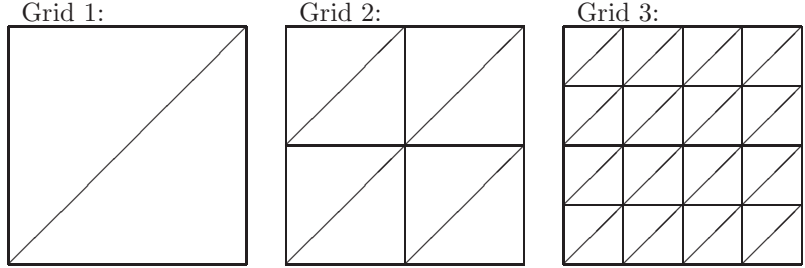

We compute the finite element solutions for the solution \eqref{e1} on uniform 
       triangular grids shown in Figure \ref{grid1} by 
  the  $P_k$ WG finite elements, defined in \eqref{Vk}--\eqref{Vh0}, 
      for $k=1,2,3$ and $4$.
The results are listed in Tables \ref{t-1}--\ref{t-2}. 
The optimal order of convergence is achieved for all solutions in all norms. 
Further, the method is shown pressure robust as the convergence is independent of
  $\lambda$ in \eqref{WG}.

\begin{table}[H]
  \centering  \renewcommand{\arraystretch}{1.1}
  \caption{The error and the computed order of convergence for the solution \eqref{e1} on Figure \ref{grid1} meshes: \ (a) $P_1$ WG, $\lambda=1$;   \ (b) $P_1$ WG, $\lambda=10^7$; \ (c) $P_2$ WG, $\lambda=1$;   \ (d) $P_2$ WG, $\lambda=10^7$.  }
  \label{t-1}
\begin{tabular}{cc|cc|cc}
\hline
Method & Grid &  $\|\b u-\b u_0\|_0$  & $O(h^r)$ & $\3bar\b u-\b u_h\3bar$  & $O(h^r)$   \\
\hline
\multirow{3}{1mm}{(a)} & 
 5&    0.824E-04 &  2.0&    0.944E-02 &  1.0\\
& 6&    0.207E-04 &  2.0&    0.473E-02 &  1.0\\
& 7&    0.517E-05 &  2.0&    0.237E-02 &  1.0\\
\hline
\multirow{3}{1mm}{(b)} & 
 5&    0.101E-03 &  2.0&    0.102E-01 &  1.0\\
& 6&    0.254E-04 &  2.0&    0.513E-02 &  1.0\\
& 7&    0.632E-05 &  2.0&    0.256E-02 &  1.0\\
\hline
\multirow{3}{1mm}{(c)} & 
 5&    0.144E-05 &  3.0&    0.416E-03 &  2.0\\
&6&    0.180E-06 &  3.0&    0.104E-03 &  2.0\\
&7&    0.224E-07 &  3.0&    0.261E-04 &  2.0\\
\hline
\multirow{3}{1mm}{(d)} & 
 4&    0.129E-04 &  3.0&    0.169E-02 &  1.9\\
&5&    0.160E-05 &  3.0&    0.425E-03 &  2.0\\
&6&    0.212E-06 &  2.9&    0.107E-03 &  2.0\\
\hline
    \end{tabular}%
\end{table}%

\begin{table}[H]
  \centering  \renewcommand{\arraystretch}{1.1}
  \caption{The error and the computed order of convergence for the solution \eqref{e1} on Figure \ref{grid1} meshes: \ (a) $P_3$ WG, $\lambda=1$;   \ (b) $P_3$ WG, $\lambda=10^7$;  \ (c) $P_4$ WG, $\lambda=1$;   \ (d) $P_4$ WG, $\lambda=10^7$.  }
  \label{t-2}
\begin{tabular}{cc|cc|cc}
\hline
Method & Grid &  $\|\b u-\b u_0\|_0$  & $O(h^r)$ & $\3bar\b u-\b u_h\3bar$  & $O(h^r)$   \\
\hline
\multirow{3}{1mm}{(a)} & 
 4&    0.427E-06 &  4.0&    0.102E-03 &  3.0\\
& 5&    0.267E-07 &  4.0&    0.128E-04 &  3.0\\
& 6&    0.167E-08 &  4.0&    0.160E-05 &  3.0\\
\hline
\multirow{3}{1mm}{(b)} & 
 4&    0.457E-06 &  4.0&    0.102E-03 &  3.0\\
& 5&    0.310E-07 &  3.9&    0.127E-04 &  3.0\\
& 6&    0.135E-03 &  0.0&    0.253E-02 &  0.0\\
\hline
\multirow{3}{1mm}{(c)} & 
 3&    0.549E-06 &  5.0&    0.968E-04 &  4.0\\
& 4&    0.169E-07 &  5.0&    0.601E-05 &  4.0\\
& 5&    0.523E-09 &  5.0&    0.375E-06 &  4.0\\
\hline
\multirow{3}{1mm}{(d)} & 
 3&    0.568E-06 &  5.0&    0.964E-04 &  4.0\\
& 4&    0.173E-07 &  5.0&    0.599E-05 &  4.0\\
& 5&    0.567E-09 &  4.9&    0.375E-06 &  4.0\\
\hline
    \end{tabular}%
\end{table}%

We next compute the finite element solutions for \eqref{e1} on non-convex polygonal 
        grids shown in Figure \ref{grid2} by 
  the  $P_k$ WG finite elements, defined in \eqref{Vk}--\eqref{Vh0}, 
      for $k=1,2,3$ and $4$.
The results are listed in Tables \ref{t-3}--\ref{t-4}. 
The optimal order of convergence is achieved for all solutions in all norms. 
And the convergence is independent of the impressible 
   parameter $\lambda$ in \eqref{WG}.

     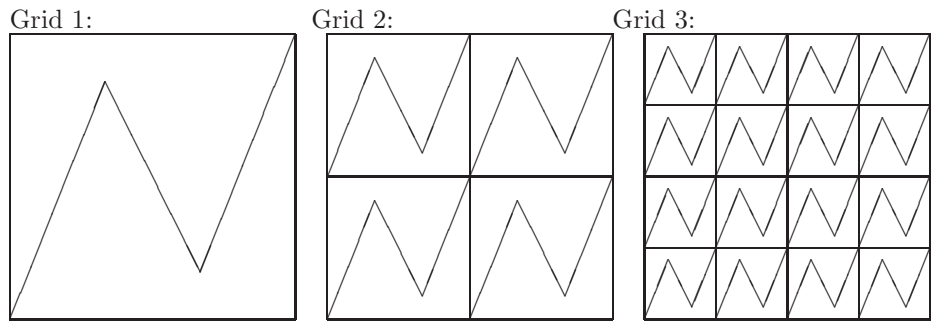
\begin{figure}[H] \setlength\unitlength{1.2pt}\begin{center}
    \begin{picture}(290,98)(0,0)
     \def\mc{\begin{picture}(90,90)(0,0)
       \put(0,0){\line(1,0){90}} \put(0,90){\line(1,0){90}}
      \put(0,0){\line(0,1){90}}  \put(90,0){\line(0,1){90}} 
      
       \put(30,75){\line(1,-2){30}}  
       \put(0,0){\line(2,5){30}}         \put(90,90){\line(-2,-5){30}}  
      \end{picture}}

    \put(0,0){\mc} \put(0,92){Grid 1:} \put(95,92){Grid 2:}  \put(190,92){Grid 3:}
      \put(100,0){\setlength\unitlength{0.6pt}\begin{picture}(90,90)(0,0)
    \put(0,0){\mc}\put(90,0){\mc}\put(0,90){\mc}\put(90,90){\mc}\end{picture}}
     \put(200,0){\setlength\unitlength{0.3pt}\begin{picture}(90,90)(0,0)
      \multiput(0,0)(90,0){4}{\multiput(0,0)(0,90){4}{\mc}} \end{picture}}
    \end{picture}
 \caption{The first three non-convex polygonal grids for the computation in
    Tables \ref{t-3}--\ref{t-4}. }\label{grid2} 
    \end{center} \end{figure}

\begin{table}[H]
  \centering  \renewcommand{\arraystretch}{1.1}
  \caption{The error and the computed order of convergence for the solution \eqref{e1} on Figure \ref{grid2}, non-convex polygonal meshes: 
  \ (a) $P_1$ WG, $\lambda=1$;   \ (b) $P_1$ WG, $\lambda=10^5$;  \ (c) $P_2$ WG, $\lambda=1$;   \ (d) $P_2$ WG, $\lambda=10^5$.  }
  \label{t-3}
\begin{tabular}{cc|cc|cc}
\hline
Method & Grid &  $\|\b u-\b u_0\|_0$  & $O(h^r)$ & $\3bar\b u-\b u_h\3bar$  & $O(h^r)$   \\
\hline
\multirow{3}{1mm}{(a)} & 
 5&    0.393E-03 &  1.8&    0.332E-01 &  1.0\\
&  6&    0.104E-03 &  1.9&    0.166E-01 &  1.0\\
&  7&    0.265E-04 &  2.0&    0.829E-02 &  1.0\\
\hline
\multirow{3}{1mm}{(b)} & 
 5&    0.320E-03 &  1.8&    0.339E-01 &  1.0\\
&  6&    0.831E-04 &  1.9&    0.170E-01 &  1.0\\
&  7&    0.210E-04 &  2.0&    0.850E-02 &  1.0\\
\hline
\multirow{3}{1mm}{(c)} & 
 4&    0.116E-03 &  2.9&    0.241E-01 &  1.9\\
&  5&    0.147E-04 &  3.0&    0.613E-02 &  2.0\\
&  6&    0.184E-05 &  3.0&    0.154E-02 &  2.0\\
\hline
\multirow{3}{1mm}{(d)} & 
 4&    0.116E-03 &  2.9&    0.240E-01 &  1.9\\
&  5&    0.146E-04 &  3.0&    0.612E-02 &  2.0\\
&  6&    0.183E-05 &  3.0&    0.154E-02 &  2.0\\
\hline
    \end{tabular}%
\end{table}%

\begin{table}[H]
  \centering  \renewcommand{\arraystretch}{1.1}
  \caption{The error and the computed order of convergence for the solution \eqref{e1} on Figure \ref{grid2}, non-convex polygonal meshes: 
  \ (a) $P_3$ WG, $\lambda=1$;   \ (b) $P_3$ WG, $\lambda=10^5$;  \ (c) $P_4$ WG, $\lambda=1$;   \ (d) $P_4$ WG, $\lambda=10^5$.  }
  \label{t-4}
\begin{tabular}{cc|cc|cc}
\hline
Method & Grid &  $\|\b u-\b u_0\|_0$  & $O(h^r)$ & $\3bar\b u-\b u_h\3bar$  & $O(h^r)$   \\
\hline
\multirow{3}{1mm}{(a)} & 
 4&    0.203E-04 &  3.9&    0.684E-02 &  2.9\\
&  5&    0.129E-05 &  4.0&    0.877E-03 &  3.0\\
&  6&    0.811E-07 &  4.0&    0.110E-03 &  3.0\\
\hline
\multirow{3}{1mm}{(b)} & 
 3&    0.298E-03 &  3.4&    0.494E-01 &  2.3\\
&  4&    0.203E-04 &  3.9&    0.684E-02 &  2.9\\
&  5&    0.130E-05 &  4.0&    0.876E-03 &  3.0\\
\hline
\multirow{3}{1mm}{(c)} & 
 3&    0.109E-03 &  4.8&    0.254E-01 &  3.7\\
&  4&    0.353E-05 &  4.9&    0.165E-02 &  3.9\\
&  5&    0.111E-06 &  5.0&    0.104E-03 &  4.0\\
\hline
\multirow{3}{1mm}{(d)} & 
 3&    0.109E-03 &  4.8&    0.254E-01 &  3.7\\
&  4&    0.374E-05 &  4.9&    0.165E-02 &  3.9\\
&  5&    0.427E-05 &  0.0&    0.109E-03 &  3.9\\
\hline
    \end{tabular}%
\end{table}%

In the 3D numerical computation,  the domain for problem \eqref{WG}
   is the unit cube $\Omega=(0,1)\times(0,1)\times(0,1)$.
We choose an $\b f$ and an $\b g$ in \eqref{WG} so that the exact solution is
\an{\label{e3} \ad{  
  \b u &=\p{e^{y+z} \\e^{z+x}\\e^{z+x} }. }  }

We   compute the finite element solutions for \eqref{e3} on the
     tetrahedral   grids shown in Figure \ref{grid4} by 
  the  $P_k$ WG finite elements, defined in \eqref{Vk}--\eqref{Vh0}, 
      for $k=1,2,3$ and $4$.
The results are listed in Tables  \ref{t-5}--\ref{t-6}. 
The optimal order of convergence is achieved for all solutions in all norms. 
And the numerical results indicate the method is pressure robust.

\begin{figure}[H] 
\begin{center}
 \setlength\unitlength{1pt}
    \begin{picture}(320,118)(0,3)
    \put(0,0){\begin{picture}(110,110)(0,0) \put(25,102){Grid 1:}
       \multiput(0,0)(80,0){2}{\line(0,1){80}}  \multiput(0,0)(0,80){2}{\line(1,0){80}}
       \multiput(0,80)(80,0){2}{\line(1,1){20}} \multiput(0,80)(20,20){2}{\line(1,0){80}}
       \multiput(80,0)(0,80){2}{\line(1,1){20}}  \multiput(80,0)(20,20){2}{\line(0,1){80}}
    \put(80,0){\line(-1,1){80}}\put(80,0){\line(1,5){20}}\put(80,80){\line(-3,1){60}}
      \end{picture}}
    \put(110,0){\begin{picture}(110,110)(0,0)\put(25,102){Grid 2:}
       \multiput(0,0)(40,0){3}{\line(0,1){80}}  \multiput(0,0)(0,40){3}{\line(1,0){80}}
       \multiput(0,80)(40,0){3}{\line(1,1){20}} \multiput(0,80)(10,10){3}{\line(1,0){80}}
       \multiput(80,0)(0,40){3}{\line(1,1){20}}  \multiput(80,0)(10,10){3}{\line(0,1){80}}
    \put(80,0){\line(-1,1){80}}\put(80,0){\line(1,5){20}}\put(80,80){\line(-3,1){60}}
       \multiput(40,0)(40,40){2}{\line(-1,1){40}} 
        \multiput(80,40)(10,-30){2}{\line(1,5){10}}
        \multiput(40,80)(50,10){2}{\line(-3,1){30}}
      \end{picture}}
    \put(220,0){\begin{picture}(110,110)(0,0) \put(25,102){Grid 3:}
       \multiput(0,0)(20,0){5}{\line(0,1){80}}  \multiput(0,0)(0,20){5}{\line(1,0){80}}
       \multiput(0,80)(20,0){5}{\line(1,1){20}} \multiput(0,80)(5,5){5}{\line(1,0){80}}
       \multiput(80,0)(0,20){5}{\line(1,1){20}}  \multiput(80,0)(5,5){5}{\line(0,1){80}}
    \put(80,0){\line(-1,1){80}}\put(80,0){\line(1,5){20}}\put(80,80){\line(-3,1){60}}
       \multiput(40,0)(40,40){2}{\line(-1,1){40}} 
        \multiput(80,40)(10,-30){2}{\line(1,5){10}}
        \multiput(40,80)(50,10){2}{\line(-3,1){30}}

       \multiput(20,0)(60,60){2}{\line(-1,1){20}}   \multiput(60,0)(20,20){2}{\line(-1,1){60}} 
        \multiput(80,60)(15,-45){2}{\line(1,5){5}} \multiput(80,20)(5,-15){2}{\line(1,5){15}}
        \multiput(20,80)(75,15){2}{\line(-3,1){15}}\multiput(60,80)(25,5){2}{\line(-3,1){45}}
      \end{picture}}

    \end{picture} 
    \end{center} 
\caption{ The first three grids for the computation 
    in Tables  \ref{t-5}-\ref{t-6}.  } 
\label{grid4}
\end{figure}
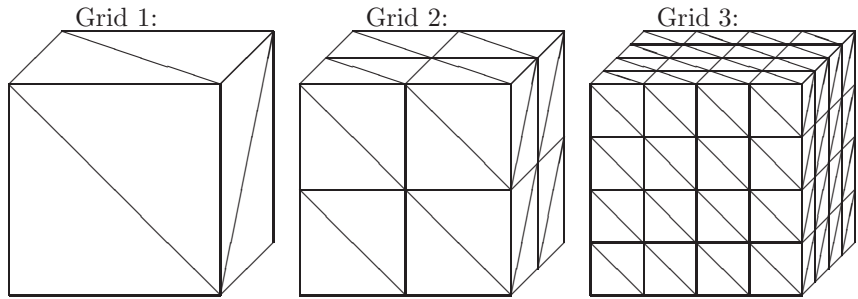

\begin{table}[H]
  \centering  \renewcommand{\arraystretch}{1.1}
  \caption{The error and the computed order of convergence for the solution \eqref{e3} on Figure \ref{grid4}, tetrahedral meshes: 
  \ (a) $P_3$ WG, $\lambda=1$;   \ (b) $P_3$ WG, $\lambda=10^4$;  \ (c) $P_4$ WG, $\lambda=1$;   \ (d) $P_4$ WG, $\lambda=10^4$.  }
  \label{t-5}
\begin{tabular}{cc|cc|cc}
\hline
Method & Grid &  $\|\b u-\b u_0\|_0$  & $O(h^r)$ & $\3bar\b u-\b u_h\3bar$  & $O(h^r)$   \\
\hline
\multirow{3}{1mm}{(a)} & 
 3 &   0.107E-01 &1.90 &   0.625E+00 &0.90 \\
& 4 &   0.267E-02 &2.00 &   0.322E+00 &0.96 \\
& 5 &   0.663E-03 &2.01 &   0.163E+00 &0.98 \\
\hline
\multirow{3}{1mm}{(b)} & 
 3 &   0.157E-01 &1.89 &   0.672E+00 &0.97 \\
& 4 &   0.389E-02 &2.01 &   0.338E+00 &0.99 \\
& 5 &   0.966E-03 &2.01 &   0.170E+00 &0.99 \\
\hline
\multirow{3}{1mm}{(c)} & 
 3 &   0.516E-03 &2.87 &   0.373E-01 &1.90 \\
& 4 &   0.662E-04 &2.96 &   0.956E-02 &1.96 \\
& 5 &   0.835E-05 &2.99 &   0.241E-02 &1.98 \\
\hline
\multirow{3}{1mm}{(d)} & 
 2 &   0.493E-02 &2.29 &   0.151E+00 &1.77 \\
& 3 &   0.604E-03 &3.03 &   0.391E-01 &1.95 \\
& 4 &   0.741E-04 &3.03 &   0.992E-02 &1.98 \\
\hline
    \end{tabular}%
\end{table}%

We next  compute the finite element solutions for \eqref{e3} on the
    nonconvex polyhedral grids shown in Figure \ref{grid5} by 
  the  $P_k$ WG finite elements, defined in \eqref{Vk}--\eqref{Vh0}, 
      for $k=1,2$ and $3$.
The results are listed in Table \ref{t-7}. 
The optimal order of convergence is achieved for all solutions in all norms. 
And the numerical results indicate the method is pressure robust.

\begin{table}[H]
  \centering  \renewcommand{\arraystretch}{1.1}
  \caption{The error and the computed order of convergence for the solution \eqref{e3} on Figure \ref{grid4}, tetrahedral meshes: 
  \ (a) $P_3$ WG, $\lambda=1$;   \ (b) $P_3$ WG, $\lambda=10^4$;  \ (c) $P_4$ WG, $\lambda=1$;   \ (d) $P_4$ WG, $\lambda=10^4$.  }
  \label{t-6}
\begin{tabular}{cc|cc|cc}
\hline
Method & Grid &  $\|\b u-\b u_0\|_0$  & $O(h^r)$ & $\3bar\b u-\b u_h\3bar$  & $O(h^r)$   \\
\hline
\multirow{3}{1mm}{(a)} & 
 2 &   0.298E-03 &3.61 &   0.136E-01 &2.74 \\
& 3 &   0.203E-04 &3.88 &   0.179E-02 &2.92 \\
& 4 &   0.131E-05 &3.96 &   0.229E-03 &2.97 \\
\hline
\multirow{3}{1mm}{(b)} & 
 2 &   0.328E-03 &3.91 &   0.140E-01 &2.80 \\
& 3 &   0.215E-04 &3.93 &   0.181E-02 &2.95 \\
& 4 &   0.138E-05 &3.96 &   0.230E-03 &2.98 \\
\hline
\multirow{3}{1mm}{(c)} & 
 2 &   0.187E-04 &4.70 &   0.106E-02 &3.80 \\
& 3 &   0.629E-06 &4.89 &   0.692E-04 &3.94 \\
& 4 &   0.202E-07 &4.96 &   0.439E-05 &3.98 \\
\hline
\multirow{3}{1mm}{(d)} & 
 1 &   0.620E-03 &0.00 &   0.156E-01 &0.00 \\
& 2 &   0.200E-04 &4.95 &   0.108E-02 &3.85 \\
& 3 &   0.650E-06 &4.95 &   0.697E-04 &3.95 \\
\hline
    \end{tabular}%
\end{table}%

\begin{figure}[H] 
\begin{center}
 \setlength\unitlength{1pt}
    \begin{picture}(220,118)(0,3)
    \put(0,0){\begin{picture}(110,110)(0,0) \put(25,102){Grid 1:}
 \multiput(0,0)(80,0){2}{\line(0,1){80}}  \multiput(0,0)(0,80){2}{\line(1,0){80}}
 \multiput(0,80)(80,0){2}{\line(1,1){20}} \multiput(0,80)(20,20){2}{\line(1,0){80}}
 \multiput(80,0)(0,80){2}{\line(1,1){20}}  \multiput(80,0)(20,20){2}{\line(0,1){80}}
  
 \multiput(0,0)(2,2){10}{\circle*{1}} \multiput(20,20)(2,0){40}{\circle*{1}}
 \multiput(20,20)(0,2){40}{\circle*{1}} 
 \put(0,0){\line(2,5){26.66}} 
 \put(26.66,66.65){\line(1,-2){26.66}} \multiput(46.66,86.65)(1,-2){26}{\circle*{1}} 
 \put(80,80){\line(-2,-5){26.66}} \multiput(20,20)(1,2.5){26}{\circle*{1}} 
 \multiput(26.66,66.65)(2,2){10}{\circle*{1}} 
  \multiput(100,100)(-1,-2.5){26}{\circle*{1}}
  \multiput(53.33,13.35)(2,2){10}{\circle*{1}} 
      \end{picture}}
      
    \put(110,0){\begin{picture}(110,110)(0,0)\put(25,102){Grid 2:}
       \multiput(0,0)(40,0){3}{\line(0,1){80}}  \multiput(0,0)(0,40){3}{\line(1,0){80}}
       \multiput(0,80)(40,0){3}{\line(1,1){20}} \multiput(0,80)(10,10){3}{\line(1,0){80}}
       \multiput(80,0)(0,40){3}{\line(1,1){20}}  \multiput(80,0)(10,10){3}{\line(0,1){80}}
       
       \multiput(-3,0)(40,0){2}{\multiput(0,0)(0,40){2}{ 
       \begin{picture}(40,40)(0,0)\put(0,0){\line(2,5){13.33}}
        \put(40,40){\line(-2,-5){13.33}} \put(13.33,33.33){\line(1,-2){13.33}} 
       \end{picture}}} 
      \end{picture}}

    \end{picture} 
    \end{center} 
\caption{ The first three grids for the computation 
    in Table \ref{t-7}.  } 
\label{grid5}
\end{figure}

\begin{table}[H]
  \centering  \renewcommand{\arraystretch}{1.1}
  \caption{The error and the computed order of convergence for the solution \eqref{e3} on Figure \ref{grid5}, nonconvex polyhedral meshes: 
  \ (a) $P_1$ WG, $\lambda=1$;   \ (b) $P_1$ WG, $\lambda=10^4$;  \ (c) $P_2$ WG, $\lambda=1$;   \ (d) $P_2$ WG, $\lambda=10^4$; 
  \ (e) $P_3$ WG, $\lambda=1$;   \ (f) $P_3$ WG, $\lambda=10^4$.  }
  \label{t-7}
\begin{tabular}{cc|cc|cc}
\hline
Method & Grid &  $\|\b u-\b u_0\|_0$  & $O(h^r)$ & $\3bar\b u-\b u_h\3bar$  & $O(h^r)$   \\
\hline
\multirow{2}{1mm}{(a)} &  
 4 &   0.185E-01 & 2.0 &   0.217E+01 & 1.0 \\
&5 &   0.466E-02 & 2.0 &   0.109E+01 & 1.0 \\
\hline
\multirow{2}{1mm}{(b)} & 
 4 &   0.190E-01 & 2.0 &   0.217E+01 & 1.0 \\
&5 &   0.480E-02 & 2.0 &   0.109E+01 & 1.0 \\
\hline
\multirow{2}{1mm}{(c)} & 
 4 &   0.859E-03 & 3.0 &   0.159E+00 & 2.0 \\
&5 &   0.109E-03 & 3.0 &   0.401E-01 & 2.0 \\
\hline
\multirow{2}{1mm}{(d)} & 
 3 &   0.672E-02 & 2.9 &   0.620E+00 & 1.9 \\
&4 &   0.861E-03 & 3.0 &   0.159E+00 & 2.0 \\
\hline
\multirow{2}{1mm}{(e)} & 
 2 &   0.719E-02 & 3.8 &   0.497E+00 & 2.8 \\
&3 &   0.484E-03 & 3.9 &   0.656E-01 & 2.9 \\
\hline
\multirow{2}{1mm}{(f)} & 
 2 &   0.716E-02 & 3.8 &   0.496E+00 & 2.8 \\
&3 &   0.481E-03 & 3.9 &   0.656E-01 & 2.9 \\
\hline
    \end{tabular}%
\end{table}%

We finally  compute the finite element solutions for \eqref{e3} on the
    nonconvex polyhedral grids shown in Figure \ref{grid6} by 
  the  $P_k$ WG finite elements, defined in \eqref{Vk}--\eqref{Vh0}, 
      for $k=1$ and $3$.
The results are listed in Table \ref{t-8}. 
The optimal order of convergence is achieved for all solutions in all norms. 
And the numerical results indicate the method is pressure robust.

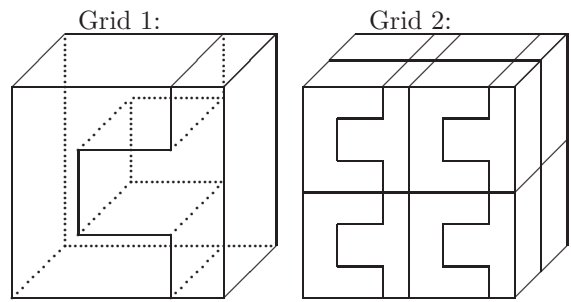
\begin{figure}[H] 
\begin{center}
 \setlength\unitlength{1pt}
    \begin{picture}(220,118)(0,3)
    \put(0,0){\begin{picture}(110,110)(0,0) \put(25,102){Grid 1:}
 \multiput(0,0)(80,0){2}{\line(0,1){80}}  \multiput(0,0)(0,80){2}{\line(1,0){80}}
 \multiput(0,80)(80,0){2}{\line(1,1){20}} \multiput(0,80)(20,20){2}{\line(1,0){80}}
 \multiput(80,0)(0,80){2}{\line(1,1){20}}  \multiput(80,0)(20,20){2}{\line(0,1){80}}
 \put(60,0){\line(0,1){24}}\put(60,80){\line(0,-1){24}}\put(60,80){\line(1,1){20}} 
 \put(25,24){\line(1,0){35}}\put(25,24){\line(0,1){32}}\put(25,56){\line(1,0){35}} 
 \multiput(0,0)(2,2){10}{\circle*{1}} \multiput(20,20)(2,0){40}{\circle*{1}}
 \multiput(20,20)(0,2){40}{\circle*{1}} \multiput(60,0)(2,2){10}{\circle*{1}} 
 \multiput(60,24)(2,2){10}{\circle*{1}} \multiput(25,24)(2,2){10}{\circle*{1}} 
 \multiput(60,56)(2,2){10}{\circle*{1}} \multiput(25,56)(2,2){10}{\circle*{1}} 
 \multiput(80,76)(0,2){12}{\circle*{1}}  \multiput(80,76)(-2,0){18}{\circle*{1}}  
 \multiput(45,44)(0,2){16}{\circle*{1}}  \multiput(45,44)(2,0){18}{\circle*{1}}  
      \end{picture}}
      
    \put(110,0){\begin{picture}(110,110)(0,0)\put(25,102){Grid 2:}
       \multiput(0,0)(40,0){3}{\line(0,1){80}}  \multiput(0,0)(0,40){3}{\line(1,0){80}}
       \multiput(0,80)(40,0){3}{\line(1,1){20}} \multiput(0,80)(10,10){3}{\line(1,0){80}}
       \multiput(80,0)(0,40){3}{\line(1,1){20}}  \multiput(80,0)(10,10){3}{\line(0,1){80}}
       \multiput(30,80)(40,0){2}{\line(1,1){20}}
       \multiput(-3,0)(40,0){2}{\multiput(0,0)(0,40){2}{ 
       \begin{picture}(40,40)(0,0)\put(30,0){\line(0,1){12}}
        \put(30,40){\line(0,-1){12}} \put(12.5,12){\line(1,0){17.5}}
         \put(12.5,28){\line(1,0){17.5}} \put(12.5,12){\line(0,1){16}}
       \end{picture}}} 
      \end{picture}}

    \end{picture} 
    \end{center} 
\caption{ The first three grids for the computation 
    in Table  \ref{t-8}.  } 
\label{grid6}
\end{figure}

\begin{table}[H]
  \centering  \renewcommand{\arraystretch}{1.1}
  \caption{The error and the computed order of convergence for the solution \eqref{e3} on Figure \ref{grid6}, nonconvex polyhedral meshes: 
  \ (a) $P_1$ WG, $\lambda=1$;   \ (b) $P_1$ WG, $\lambda=10^4$;  \ (c) $P_2$ WG, $\lambda=1$;   \ (d) $P_2$ WG, $\lambda=10^4$.  }
  \label{t-8}
\begin{tabular}{cc|cc|cc}
\hline
Method & Grid &  $\|\b u-\b u_0\|_0$  & $O(h^r)$ & $\3bar\b u-\b u_h\3bar$  & $O(h^r)$   \\
\hline
\multirow{3}{1mm}{(a)} & 
 3 &   0.520E-01 & 2.0 &   0.386E+01 & 1.0 \\
& 4 &   0.127E-01 & 2.0 &   0.191E+01 & 1.0 \\
& 5 &   0.314E-02 & 2.0 &   0.952E+00 & 1.0 \\
\hline
\multirow{3}{1mm}{(b)} & 
 2 &   0.309E+00 & 1.9 &   0.104E+02 & 1.0 \\
& 3 &   0.808E-01 & 1.9 &   0.517E+01 & 1.0 \\
& 4 &   0.207E-01 & 2.0 &   0.252E+01 & 1.0 \\
\hline
\multirow{3}{1mm}{(c)} & 
 2 &   0.289E-01 & 2.9 &   0.135E+01 & 1.9 \\
& 3 &   0.341E-02 & 3.1 &   0.336E+00 & 2.0 \\
& 4 &   0.408E-03 & 3.1 &   0.835E-01 & 2.0 \\
\hline
\multirow{3}{1mm}{(d)} & 
 1 &   0.355E+00 & 0.0 &   0.811E+01 & 0.0 \\
& 2 &   0.421E-01 & 3.1 &   0.166E+01 & 2.3 \\
& 3 &   0.487E-02 & 3.1 &   0.389E+00 & 2.1 \\
\hline
    \end{tabular}%
\end{table}%


\begin{thebibliography}{99}
\bibitem{4}{\sc Arnold, D.N.,Awanou, G.,Winther, R},  {\em Nonconforming tetrahedral mixed
finite elements for elasticity}. Mathematical Models and Methods in Applied
Sciences 24(04), 783–796 (2014)

\bibitem{5}{\sc Arnold, D.N., Brezzi, F., Cockburn, B., Marini, L.D},  {\em Unified analysis of
discontinuous Galerkin methods for elliptic problems}. SIAM journal on
numerical analysis 39(5), 1749–1779 (2002)

\bibitem{9}{\sc  Arnold, D.N., Winther, R},  {\em Nonconforming mixed elements for elasticity.
Mathematical models and methods in applied sciences}, 13(03), 295–307(2003)
\bibitem{12}{\sc  Awanou, G},  {\em A rotated nonconforming rectangular mixed element for elasticity}. Calcolo 46(1), 49–60 (2009)
\bibitem{13}{\sc  Boffi, D., Brezzi, F., Fortin, M},  {\em Reduced symmetry elements in linear
elasticity}. Communications on Pure \& Applied Analysis 8(1), 95–121
(2009)
\bibitem{15}{\sc Brezzi, F., Marini, L.D},  {\em The three-field formulation for elasticity problems}.
GAMM-Mitteilungen 28(2), 124–153 (2005)
 

 \bibitem{pdwg3} {\sc W. Cao, C. Wang and J. Wang},  {\em An $L^p$-Primal-Dual Weak Galerkin Method for div-curl Systems}, Journal of Computational and Applied Mathematics, vol. 422, 114881, 2023.
 \bibitem{pdwg4}{\sc  W. Cao, C. Wang and J. Wang},  {\em An $L^p$-Primal-Dual Weak Galerkin Method for Convection-Diffusion Equations}, Journal of Computational and Applied Mathematics, vol. 419, 114698, 2023. 
 \bibitem{pdwg5}{\sc W. Cao, C. Wang and J. Wang},  {\em A New Primal-Dual Weak Galerkin Method for Elliptic Interface Problems with Low Regularity Assumptions}, Journal of Computational Physics, vol. 470, 111538, 2022.
   \bibitem{wg11}{\sc  S. Cao, C. Wang and J. Wang},  {\em A new numerical method for div-curl Systems with Low Regularity Assumptions}, Computers and Mathematics with Applications, vol. 144, pp. 47-59, 2022.
\bibitem{pdwg10}{\sc  W. Cao and C. Wang},  {\em New Primal-Dual Weak Galerkin Finite Element Methods for Convection-Diffusion Problems}, Applied Numerical Mathematics, vol. 162, pp. 171-191, 2021. 


 

   \bibitem{19}{\sc  Chen, Y., Huang, J., Huang, X., Xu, Y},  {\em On the local discontinuous Galerkin
method for linear elasticity}. Mathematical Problems in Engineering (2010)

\bibitem{28}{\sc Gong, S., Wu, S., Xu, J.},  {\em  New hybridized mixed methods for linear elasticity
and optimal multilevel solvers}. Numerische Mathematik 141(2),
569–604 (2019)

\bibitem{29}{\sc  Gopalakrishnan, J., Guzman, J},  {\em Symmetric nonconforming mixed finite
elements for linear elasticity}. SIAM Journal on Numerical Analysis 49(4),
1504–1520 (2011)
\bibitem{31}{\sc  Hong, Q., Hu, J., Shu, S., Xu, J},  {\em A discontinuous Galerkin method for the
fourth-order curl problem}. Journal of Computational Mathematics 30(6),
565–578 (2012)
 
\bibitem{41}{\sc Hu, J},  {\em Finite element approximations of symmetric tensors on simplicial
grids inRn: The higher order case}. Journal of Computational Mathematics
33(3), 283–296 (2015)


 

 \bibitem{wg14}{\sc D. Li, Y. Nie, and C. Wang},  {\em Superconvergence of Numerical Gradient for Weak Galerkin Finite Element Methods on Nonuniform Cartesian Partitions in Three Dimensions}, Computers and Mathematics with Applications, vol 78(3), pp. 905-928, 2019.  
  \bibitem{wg1} {\sc D. Li, C. Wang and J. Wang},  {\em An Extension of the Morley Element on General Polytopal Partitions Using Weak Galerkin Methods}, Journal of Scientific Computing, 100, vol 27, 2024.  
 \bibitem{wg2} {\sc D. Li, C. Wang and S. Zhang},  {\em Weak Galerkin methods for elliptic interface problems on curved polygonal partitions}, Journal of Computational and Applied Mathematics, pp. 115995, 2024. 
\bibitem{wg5} {\sc D. Li, C. Wang, J.  Wang and X. Ye},  {\em Generalized weak Galerkin finite element methods for second order elliptic problems}, Journal of Computational and Applied Mathematics, vol. 445, pp. 115833, 2024.
 \bibitem{wg6} {\sc D. Li, C. Wang, J. Wang and S. Zhang},  {\em High Order Morley Elements for Biharmonic Equations on Polytopal Partitions}, Journal of Computational and Applied Mathematics, Vol. 443, pp. 115757, 2024.
 \bibitem{wg7} {\sc D. Li, C. Wang and J. Wang},  {\em Curved Elements in Weak Galerkin Finite Element Methods}, Computers and Mathematics with Applications, Vol. 153, pp. 20-32, 2024.
\bibitem{wg8} {\sc D. Li, C. Wang and J. Wang},  {\em Generalized Weak Galerkin Finite Element Methods for Biharmonic Equations}, Journal of Computational and Applied Mathematics, vol. 434, 115353, 2023.
 \bibitem{pdwg1} {\sc D. Li, C. Wang and J. Wang},  {\em An $L^p$-primal-dual finite element method for first-order transport problems}, Journal of Computational and Applied Mathematics, vol. 434, 115345, 2023.
 \bibitem{pdwg2} {\sc D. Li and C. Wang},  {\em A simplified primal-dual weak Galerkin finite element method for Fokker-Planck type equations}, Journal of Numerical Methods for Partial Differential Equations, vol 39, pp. 3942-3963, 2023.
\bibitem{pdwg6}{\sc  D. Li, C. Wang and J. Wang},  {\em Primal-Dual Weak Galerkin Finite Element Methods for Transport Equations in Non-Divergence Form}, Journal of Computational and Applied Mathematics, vol. 412, 114313, 2022.
  \bibitem{wg13}{\sc  D. Li, C. Wang, and J. Wang},  {\em Superconvergence of the Gradient Approximation for Weak Galerkin Finite Element Methods on Rectangular Partitions}, Applied Numerical Mathematics, vol. 150, pp. 396-417, 2020.
\bibitem{lxz} {\sc B. Li, X. Xie and S. Zhang},  {\em  BPS preconditioners for a weak Galerkin finite
element method for 2D diffusion problems with strongly discontinuous
coefficients}, Computers \& Mathematics with Applications, 76(4), pp.701-724, 2018.  

\bibitem{48}{\sc  Pechstein, A., Schoberl, J},  {\em Tangential-displacement and normal–normalstress
continuous mixed finite elements for elasticity}. Mathematical Models
and Methods in Applied Sciences 21(08), 1761–1782 (2011)

\bibitem{49}{\sc Pechstein, A.S., Sch oberl, J},  {\em An analysis of the TDNNS method using
natural norms}. Numerische Mathematik 139(1), 93–120 (2018)
\bibitem{57}{\sc  Fraeijs de Veubeke, B},  {\em Stress function approach. Proc. of the World
Congress on Finite Element Methods in Structural Mechanics}. Vol. 1,
Bournemouth, Dorset, England pp. J.1–J.51 (1975)


 \bibitem{fedi}{\sc  C. Wang},  {\em A Preconditioner for the FETI-DP Method for Mortar-Type Crouzeix-Raviart Element Discretization}, Applications of Mathematics, Vol. 59, 6, pp. 653-672, 2014. 
   \bibitem{wg15}{\sc C. Wang},  {\em New Discretization Schemes for Time-Harmonic Maxwell Equations by Weak Galerkin Finite Element Methods}, Journal of Computational and Applied Mathematics, Vol. 341, pp. 127-143, 2018.  
 
   \bibitem{pdwg7}{\sc  C. Wang},  {\em Low Regularity Primal-Dual Weak Galerkin Finite Element Methods for Ill-Posed Elliptic Cauchy Problems}, Int. J. Numer. Anal. Mod., vol. 19(1), pp. 33-51, 2022.
 
 \bibitem{pdwg8}{\sc  C. Wang},  {\em A Modified Primal-Dual Weak Galerkin Finite Element Method for Second Order Elliptic Equations in Non-Divergence Form}, Int. J. Numer. Anal. Mod., vol. 18(4), pp. 500-523, 2021.

\bibitem{wang1}{\sc  C. Wang},  {\em 
Auto-Stabilized Weak Galerkin Finite Element Methods on Polytopal Meshes without Convexity Constraints}, 	arXiv:2408.11927.


\bibitem{wang2}{\sc  C. Wang},  {\em 
Auto-Stabilized Weak Galerkin Finite Element Methods for Biharmonic Equations  on Polytopal Meshes without Convexity Assumptions}, 	arXiv:.


 \bibitem{pdwg13}{\sc  C. Wang},  {\em A New Primal-Dual Weak Galerkin Finite Element Method for Ill-posed Elliptic Cauchy Problems}, Journal of Computational and Applied Mathematics, vol 371, 112629, 2020.
 
  
  
   
 \bibitem{pdwg11}{\sc  C. Wang and J. Wang},  {\em A Primal-Dual Weak Galerkin Finite Element Method for Fokker-Planck Type Equations}, SIAM Numerical Analysis, vol. 58(5), pp. 2632-2661, 2020.
 \bibitem{pdwg12}{\sc  C. Wang and J. Wang},  {\em A Primal-Dual Finite Element Method for First-Order Transport Problems}, Journal of Computational Physics, Vol. 417, 109571, 2020.
  
 \bibitem{pdwg14}{\sc  C. Wang and J. Wang},  {\em Primal-Dual Weak Galerkin Finite Element Methods for Elliptic Cauchy Problems}, Computers and Mathematics with Applications, vol 79(3), pp. 746-763, 2020. 
 \bibitem{pdwg15}{\sc  C. Wang and J. Wang},  {\em A Primal-Dual Weak Galerkin Finite Element Method for Second Order Elliptic Equations in Non-Divergence form}, Mathematics of Computation, Vol. 87, pp. 515-545, 2018.  
   
 \bibitem{wg17}{\sc C. Wang and J. Wang},  {\em Discretization of Div-Curl Systems by Weak Galerkin Finite Element Methods on Polyhedral Partitions}, Journal of Scientific Computing, Vol. 68, pp. 1144-1171, 2016.    
   \bibitem{wg19}{\sc C. Wang and J. Wang},  {\em A Hybridized Formulation for Weak Galerkin Finite Element Methods for Biharmonic Equation on Polygonal or Polyhedral Meshes}, International Journal of Numerical Analysis and Modeling, Vol. 12, pp. 302-317, 2015. 
 \bibitem{wg20}{\sc  J. Wang and C. Wang},  {\em Weak Galerkin Finite Element Methods for Elliptic PDEs}, Science China, Vol. 45, pp. 1061-1092, 2015.  
 \bibitem{wg21}{\sc C. Wang and J. Wang},  {\em An Efficient Numerical Scheme for the Biharmonic Equation by Weak Galerkin Finite Element Methods on Polygonal or Polyhedral Meshes}, Journal of Computers and Mathematics with Applications, Vol. 68, 12, pp. 2314-2330, 2014.  
 
   \bibitem{wg18}{\sc C. Wang, J. Wang, R. Wang and R. Zhang},  {\em A Locking-Free Weak Galerkin Finite Element Method for Elasticity Problems in the Primal Formulation}, Journal of Computational and Applied Mathematics, Vol. 307, pp. 346-366, 2016.   
 

 \bibitem{wg12}{\sc  C. Wang, J. Wang, X. Ye and S. Zhang},  {\em De Rham Complexes for Weak Galerkin Finite Element Spaces}, Journal of Computational and Applied Mathematics, vol. 397, pp. 113645, 2021.
 
 \bibitem{wg3} {\sc C. Wang, J. Wang and S. Zhang},  {\em Weak Galerkin Finite Element Methods for Optimal Control Problems Governed by Second Order Elliptic Partial Differential Equations}, Journal of Computational and Applied Mathematics, in press, 2024. 
 
 \bibitem{itera} {\sc C. Wang, J. Wang and S. Zhang},  {\em A parallel iterative procedure for weak Galerkin methods for second order elliptic problems}, International Journal of Numerical Analysis and Modeling, vol. 21(1), pp. 1-19, 2023.
 \bibitem{wg9} {\sc C. Wang, J. Wang and S. Zhang},  {\em Weak Galerkin Finite Element Methods for Quad-Curl Problems}, Journal of Computational and Applied Mathematics, vol. 428, pp. 115186, 2023.
   \bibitem{wy3655} {\sc J. Wang, and X. Ye}, {\em A weak Galerkin mixed finite element method for second-order elliptic problems}, Math. Comp., vol. 83, pp. 2101-2126, 2014.
\bibitem{59}{\sc Wang, F., Wu, S., Xu, J},  {\em A mixed discontinuous Galerkin method for
linear elasticity with strongly imposed symmetry}. Journal of Scientific
Computing 83(1), 1–17 (2020)
\bibitem{62}{\sc  Wu, S., Gong, S., Xu, J},  {\em Interior penalty mixed finite element methods of
any order in any dimension for linear elasticity with strongly symmetric
stress tensor}. Mathematical Models and Methods in Applied Sciences
27(14), 2711–2743 (2017)


  \bibitem{wg4} {\sc C. Wang, X. Ye and S. Zhang},  {\em A Modified weak Galerkin finite element method for the Maxwell equations on polyhedral meshes}, Journal of Computational and Applied Mathematics, vol. 448, pp. 115918, 2024. 
 \bibitem{wg10}{\sc  C. Wang and S. Zhang},  {\em A Weak Galerkin Method for Elasticity Interface Problems}, Journal of Computational and Applied Mathematics, vol. 419, 114726, 2023. 
  \bibitem{pdwg9}{\sc  C. Wang and L. Zikatanov},  {\em Low Regularity Primal-Dual Weak Galerkin Finite Element Methods for Convection-Diffusion Equations}, Journal of Computational and Applied Mathematics, vol 394, 113543, 2021.
 
 \bibitem{wg16}{\sc  C. Wang and H. Zhou},  {\em A Weak Galerkin Finite Element Method for a Type of Fourth Order Problem arising from Fluorescence Tomography}, Journal of Scientific Computing, Vol. 71(3), pp. 897-918, 2017.  

 \bibitem{ye}{\sc  Y. Xiu and S. Zhang},  {\em  A stabilizer-free weak Galerkin finite element method on
polytopal meshes}, Journal of  Computational and Applied Mathematics, vol 371, 112699, 2020.

\bibitem{63}{\sc Yi, S.Y},  {\em Nonconforming mixed finite element methods for linear elasticity
using rectangular elements in two and three dimensions}. Calcolo 42(2),
115–133 (2005)
\bibitem{64}{\sc  Yi, S.Y},  {\em A new nonconforming mixed finite element method for linear
Elasticity}. Mathematical Models and Methods in Applied Sciences 16(07),
979–999 (2006)

\end{thebibliography}
\end{document}